
 \documentclass[12pt]{amsart}
\usepackage[top=1in, bottom=1in, left=1in, right=1in]{geometry}

\usepackage{amssymb,mathrsfs,amsmath}

\newtheorem{theorem}{Theorem}[section]
\newtheorem{proposition}{Proposition}[section]
\newtheorem{lemma}[theorem]{Lemma}
\newtheorem{corollary}[theorem]{Corollary}
\theoremstyle{remark}

\newcommand{\bsp}{\begin{split}}
\newcommand{\esp}{\end{split}}
\newcommand{\be}{\begin{equation}}
\newcommand{\ee}{\end{equation}}
\newcommand{\bes}{\begin{equation*}}
\newcommand{\ees}{\end{equation*}}
\newcommand{\bv}\boldsymbol{}

\numberwithin{equation}{section}

\renewcommand{\pmod}[1]{~({\rm mod}\,#1)}

\begin{document}

\title{Mean values of multiplicative functions over function fields} 
\author{Andrew Granville}
\address{Centre de recherches math\'ematiques\\
Universit\'e de Montr\'eal\\
CP 6128 succ. Centre-Ville\\
Montr\'eal, QC H3C 3J7\\
Canada}
\email{{\tt andrew@dms.umontreal.ca}} 
\author{Adam J Harper}
\address{Jesus College \\ Cambridge \\ CB5 8BL \\ England} 
\email{\tt A.J.Harper@dpmms.cam.ac.uk}
\author{Kannan Soundararajan}
\address{Department of Mathematics\\ Stanford University\\ Stanford, CA 94305\\ USA} 
\email{\tt ksound@stanford.edu} 
\abstract{We discuss the mean values of multiplicative functions over function fields.  In particular, we adapt the authors' new proof of Hal{\' a}sz's theorem on mean values to this simpler setting.  Several of the technical difficulties that arise over the integers disappear in the function field setting, which helps bring out more clearly the main ideas of the proofs over number fields.  
We also obtain Lipschitz estimates showing the slow variation of mean values of multiplicative functions over function fields, which display some features that are not present in the integer situation.}
\endabstract
\thanks{Andrew Granville's research is partly supported by NSERC, Canada, and a Canadian Research Chair. Adam Harper is supported by a research fellowship at Jesus College, Cambridge. Kannan Soundararajan was partially supported by NSF grant DMS 1001068, and a Simons Investigator grant from the Simons Foundation.}
\subjclass[2010]{11T55, 11M38}
\keywords{Multiplicative functions, Hal\'asz's theorem, Function fields}

\date{\today} 
\maketitle 

\section{Introduction} 

We begin by introducing multiplicative functions over the polynomial ring ${\Bbb F}_q[x]$, highlighting 
the analogy with multiplicative functions over the integers.  In the subsequent sections of the introduction 
we will discuss the new results in this paper.

\subsection{An introduction to multiplicative functions over function fields} 

In the polynomial ring ${\Bbb F}_q[x]$, where $q$ is a prime power, let ${\mathcal M}$ denote the set of 
monic polynomials and let ${\mathcal M}_n$ denote the set of monic polynomials of degree $n$, so that $|{\mathcal M}_n| = q^n$.  Upper case letters like $F$, $G$ shall denote monic polynomials.  Let ${\mathcal P}$ denote the set of irreducible monic polynomials, and ${\mathcal P}_n$ those of degree $n$, and we reserve the letter $P$ to denote irreducible monic polynomials.  
We denote the degree of a polynomial $F$ by $\text{deg}(F)$. 


We are interested in multiplicative functions $f: {\mathcal M} \to {\Bbb C}$; that is, functions $f$ satisfying $f(FG)=f(F)f(G)$ for all  coprime monic polynomials $F$ and $G$.   The analogous functions  over the integers, namely multiplicative functions 
$f: {\Bbb N} \to {\Bbb C}$ (that is, functions $f$ with $f(mn)=f(m)f(n)$ for all coprime integers $m$ and $n$), have been extensively investigated.  A useful tool in studying multiplicative functions over the integers is the Dirichlet series 
$$ 
F(s) = \sum_{n=1}^{\infty} \frac{f(n)}{n^s} = \prod_{p} \Big( 1+ \frac{f(p)}{p^s}  + \frac{f(p^2)}{p^{2s}} + \ldots \Big), 
$$ 
where the product is over all primes $p$, and one usually restricts attention to those multiplicative functions for which the series 
and product are absolutely convergent in Re$(s)>1$.  Correspondingly, to study multiplicative functions over function fields,  
we put 
\begin{equation} 
\label{3} 
{\mathcal F}(z) = \sum_{F \in {\mathcal M}} f(F) z^{\text{deg}(F)} = \prod_{P} \Big( 1+ f(P) z^{\text{deg}(P)} + f(P^2) z^{2\text{deg}(P)} +\ldots \Big), 
\end{equation} 
where we assume that the series and product converge absolutely in $|z| <1/q$.  

In Section 2 we give several examples of interesting multiplicative functions over ${\Bbb F}_q[x]$, and for a general introduction to number theory over function fields we refer to \cite{Rosen}.   For the moment, it may be helpful to consider the most basic example, the multiplicative function taking the value $1$ on all monic polynomials in ${\Bbb F}_q[x]$.   Here, in the domain $|z|<1/q$ we have 
$$
{\mathcal F}(z) = \sum_{n=0}^{\infty} |{\mathcal M}_n| z^n=(1-qz)^{-1}=\prod_P (1-z^{\text{deg}(P)})^{-1}, 
$$
which corresponds to the Riemann zeta-function
$$
\zeta(s)=\sum_{n=1}^{\infty} \frac{1}{n^s} = \prod_p \Big(1-\frac{1}{p^s}\Big)^{-1}.
$$ 
Taking logarithms above yields that
\[
\sum_{m=1}^{\infty} \frac{(qz)^m}m = \sum_P \sum_{k=1}^{\infty} \frac{z^{k\, \text{deg}(P)}}k 
= \sum_P  \sum_{k= 1}^{\infty} \Lambda(P^k) \cdot \frac{z^{\text{deg}(P^k)}}{\text{deg}(P^k)} =
\sum_{n=1}^{\infty} \frac{z^n}n \cdot \sum_{F\in {\mathcal M}_n } \Lambda(F),
\]
where $\Lambda(F)$, in analogy with the von Mangoldt function of prime number theory, is defined to be zero unless $F=P^k$ is the power of an irreducible in which case  $\Lambda(F)= \text{deg}(P)$.  
Equating coefficients implies that 
\begin{equation} 
\label{2} 
\sum_{F\in {\mathcal M}_n } \Lambda(F) = q^n,
\end{equation} 
and now M{\" o}bius inversion gives the the well-known ``prime number theorem for ${\Bbb F}_q[x]$''
\begin{equation*}
|{\mathcal P}_n| = \frac 1n \sum_{d|n} \mu(d) q^{n/d} =\frac{q^n}{n} + O\Big(\frac{q^{n/2}}{n}\Big). 
\end{equation*} 
The analogous relationship in the integers, namely that $\sum_{n\leq x} \Lambda(n)=x+O(x^{1/2+o(1)})$, is an open question, equivalent to the Riemann Hypothesis.

For a general multiplicative function $f$, take logarithms in \eqref{3}, and write 
\begin{equation} 
\label{4.1bis} 
\log {\mathcal F}(z) = \sum_{F\in \mathcal{M}} \frac{\Lambda_f(F)}{\text{deg}(F)} z^{\text{deg}(F)}, 
\end{equation} 
for certain coefficients $\Lambda_f(F)$ with $\Lambda_f(F) = 0$ unless $F$ is the power of an irreducible.  
Differentiating, we may equivalently write \eqref{4.1bis} as 
\begin{equation} 
\label{4}
z\frac{{\mathcal F}^{\prime}}{{\mathcal F}}(z) = \sum_{F \in \mathcal{M}} \Lambda_f(F) z^{\text{deg}(F)}.
\end{equation} 
For a given positive real number $\kappa$ we focus on the class of multiplicative functions ${\mathcal C}(\kappa)$ 
consisting of  those $f$ for which  $f(1)=1$ and 
\begin{equation} 
\label{5} 
|\Lambda_f(F)| \le \kappa \Lambda(F) 
\end{equation} 
for all $F$.  The hypotheses \eqref{4} and \eqref{5} ensure the absolute convergence of the series 
and product in \eqref{3} for $|z|<1/q$.  

In \cite{GHS} we studied the analogous class of multiplicative functions $f$ over the integers 
for which $|\Lambda_f(n)| \le \kappa \Lambda(n)$, where $-F'(s)/F(s)=: \sum_{n\geq 1} \Lambda_f(n)/n^s$. 
The bound on $|\Lambda_f(n)|$ guarantees that the Dirichlet series and Euler product defining $F(s)$
 are absolutely convergent for Re$(s)>1$.

Given a multiplicative function $f$ in ${\mathcal C}(\kappa)$ our aim is to understand (for $n\ge 0$) the mean value 
\begin{equation} 
\label{6}  
\sigma(n) = \sigma(n;f) := \frac{1}{q^n} \sum_{M \in {\mathcal M}_n} f(M)  
\end{equation} 
in terms of the corresponding averages of $f$ over prime powers 
\begin{equation} 
\label{7} 
\chi(n) =\chi(n;f) := \frac{1}{q^n} \sum_{F \in {\mathcal M}_n} \Lambda_f(F).
\end{equation} 
We have $\sigma(0)=1,\ \chi(0)=0$ and $\sigma(1)=\chi(1)$, and then we observe (this follows from \eqref{4}, and will be justified in Remark 2 in Section \ref{examplesremarks} below) that 
\begin{equation}   
\label{2.1} 
n\sigma(n) = \sum_{k=1}^{n} \chi(k) \sigma(n-k).
\end{equation}
With this notation, we may also write \eqref{3} and  \eqref{4.1bis} as 
\begin{equation} \label{PowerSeries}
\mathcal{F}(z/q) = \sum_{n=0}^{\infty} \sigma(n) z^n = \exp\Big(\sum_{k=1}^{\infty} \frac{\chi(k)}k z^k \Big).
\end{equation}

The convolution relation \eqref{2.1} is a little more involved in the integer situation.  The discrete relation \eqref{2.1} is replaced by the continuous integral equation $u\sigma(u) = \int_0^u \chi(t) \sigma(u-t) dt$, where $\chi(t) = \psi(y^t)^{-1} \sum_{n\le y^t} \Lambda_f(n)$ is an average of the multiplicative function $f$ evaluated at prime powers (here $y$ is a suitably large parameter), and then $\sigma(u)$ approximates (in many situations) the mean-value of the function $f$ evaluated over integers up to $y^u$.  Such integral equations were first considered by Wirsing, and are discussed further in \cite{GSSpectrum}. 


\subsection{Hal\'asz's Theorem over function fields} 

In \cite{GHS} we show, generalizing a little the pioneering work of Hal{\' a}sz, that if $x$ is large, and if  $|\Lambda_f(n)|\le \kappa \Lambda(n)$ for all $n$ then 
\begin{equation} \label{NoH1} 
\frac 1x \sum_{n\le x} f(n)  \ll_{\kappa} \frac{1}{\log x} \int_{1/\log x}^1 \Big( \max_{|t| \le (\log x)^{\kappa} } \Big| \frac{F(1+\sigma+it)}{1+\sigma+it}\Big| 
  \Big) \frac{d\sigma}{\sigma} + \frac{(\log \log x)^{\kappa}}{\log x}. 
\end{equation} 
If one inserts a trivial bound $|F(1+\sigma+it)| \ll_{\kappa} 1/\sigma^{\kappa}$ on the right then one recovers the trivial bound $\ll_{\kappa} (\log x)^{\kappa - 1}$ for the left hand side (up to constants), and inserting {\em any} non-trivial information about $F(1+\sigma+it)$ supplies a non-trivial bound for the left hand side. This lossless quality is the crucial feature of Hal\'{a}sz-type theorems.

The left-hand side in \eqref{NoH1} is independent of the values of $f(p^k)$ on the prime powers $p^k>x$. Hence if we define $\Lambda_{f^\perp}(p^k)=\Lambda_{f}(p^k)$ for all prime powers with $p^k\leq x$, and $\Lambda_{f^\perp}(p^k)=0$ otherwise, then $f^\perp(n)=f(n)$ for each $n\leq x$, and the Dirichlet series
$F^\perp(s):=\sum_{n\geq 1} f^\perp(n)/n^s$ has the finite Euler product $\prod_{p\leq x} (1+f^\perp(p)/p^s+f^\perp(p^2)/p^{2s}+\ldots)$ which is analytic for all $s$ with Re$(s)>0$.   One can replace $F$ in the upper bound in \eqref{NoH1} with $F^\perp$, which is sometimes convenient. 

We now describe the corresponding result in the function field setting.  Define the multiplicative function $f^\perp$ by setting $\Lambda_{f^\perp}(M)= \Lambda_f(M)$ if $\deg M<n$, and $\Lambda_{f^\perp}(M)=0$  if $\deg M\geq n$.
Hence $\chi^\perp(m)=\chi(m)$ and $\sigma^\perp(m)=\sigma(m)$ for all $m<n$, whereas 
  $\sigma^\perp(n)=\sigma(n)-\chi(n)/n$ in view of \eqref{2.1}. Now ${\mathcal F}^\perp(z)$ is entire, whereas ${\mathcal F}(z)$ might only be analytic in a disc. Paralleling \eqref{NoH1},  we establish the following result.

  \begin{theorem}\label{Halasz} Let $f$ be in the class ${\mathcal C}(\kappa)$ and let $\sigma(n)$ be defined as in \eqref{6}.  Then for all $n\ge 1$ we have 
\begin{equation} \label{NoH2}  
| \sigma^{\perp}(n) | \le \frac{\kappa^2}{n} 
\int_0^1   \Big(\max_{|z|=\sqrt{t}} |  {\mathcal F}^\perp(z/q) | \Big) \Big(\frac{1-t^{n-1}}{1-t}\Big) dt, 
\end{equation} 
and therefore 
$$
|\sigma(n)| \le |\sigma^{\perp}(n)| + \Big|\frac{\chi(n)}{n} \Big| \le \frac{\kappa^2}{n} 
\int_0^1  \Big( \max_{|z|=\sqrt{t}} |  {\mathcal F}^\perp(z/q)|\Big)  \Big(\frac{1-t^{n-1}}{1-t}\Big) dt + \frac{\kappa}{n}.
$$ 
\end{theorem}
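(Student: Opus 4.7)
The plan is to produce the two factors of $\kappa$ by applying the convolution identity \eqref{2.1} iteratively to $\sigma^\perp$.  Starting from the identity applied once, using that $\chi^\perp(n)=0$ gives
$$n\sigma^\perp(n) = \sum_{k_1=1}^{n-1}\chi(k_1)\,\sigma^\perp(n-k_1),$$
which already provides the $1/n$ prefactor.  Since $\sigma^\perp(j)=\sigma(j)$ for $j<n$, I would substitute \eqref{2.1} for each $\sigma(n-k_1)$ to obtain a double sum involving $\chi(k_1)\chi(k_2)\sigma^\perp(m)$, where $m=n-k_1-k_2$ ranges in $\{0,1,\ldots,n-2\}$ (with $\sigma^\perp(0)=1$).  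Applying $|\chi(\cdot)|\leq \kappa$ twice and regrouping by $m$ then yields
$$n|\sigma^\perp(n)| \leq \kappa^2 \sum_{m=0}^{n-2}|\sigma^\perp(m)|\,(H_{n-1}-H_m),$$
where $H_N=\sum_{j=1}^N 1/j$.

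Next I would convert the harmonic factor to an integral via the identity
$$H_{n-1}-H_m = \int_0^1 \frac{t^m-t^{n-1}}{1-t}\,dt,$$
swap the order of summation and use $(t^m-t^{n-1})/(1-t)=\sum_{k=m}^{n-2} t^k$, so that after reindexing
$$n|\sigma^\perp(n)| \leq \kappa^2 \int_0^1 \sum_{k=0}^{n-2} t^k \Big(\sum_{m=0}^{k}|\sigma^\perp(m)|\Big)\,dt.$$
The desired factor $(1-t^{n-1})/(1-t)=\sum_{k=0}^{n-2}t^k$ is now visible, so the remaining task is to bound the inner partial sum $\sum_{m=0}^k|\sigma^\perp(m)|$ by $\max_{|z|=\sqrt{t}}|\mathcal{F}^\perp(z/q)|$ uniformly in $k$.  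The natural tool here is Parseval's identity $\sum_m|\sigma^\perp(m)|^2 t^m \leq \max_{|z|=\sqrt t}|\mathcal{F}^\perp(z/q)|^2$.

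The main obstacle is this last step: ensuring that the partial sum is controlled with no residual $k$-dependence, since a naive Cauchy-Schwarz at unit weights costs a factor $\sqrt{k+1}$ and would degrade the bound to order $1/\sqrt n$ rather than the claimed $O(1/n)$.  I expect the sharp argument either (i) exploits the cancellation already present in $\sum_m |\sigma^\perp(m)|(t^m - t^{n-1})$, using that this quantity vanishes at $t=1$ to offset the $(1-t)^{-1}$ singularity, or (ii) iterates the inequality $(j+1)|\sigma^\perp(j+1)|\leq \kappa \sum_{m\leq j}|\sigma^\perp(m)|$ together with Parseval to control the partial sums at each $t$.  Overcoming this to land exactly on the geometric weight $(1-t^{n-1})/(1-t)$ is the key technical point.

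Finally, the second inequality of the theorem is immediate from the first: since $f^\perp$ differs from $f$ only at prime powers of degree exactly $n$, the convolution identity gives $\sigma(n)=\sigma^\perp(n)+\chi(n)/n$, and $|\chi(n)|\leq \kappa$ yields the stated bound on $|\sigma(n)|$.
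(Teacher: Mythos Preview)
Your double iteration of \eqref{2.1} is correct, and in fact the resulting identity
\[
n\sigma^\perp(n)=\sum_{k_1=1}^{n-1}\frac{\chi(k_1)}{\,n-k_1\,}\sum_{k_2=1}^{n-k_1}\chi(k_2)\,\sigma^\perp(n-k_1-k_2)
\]
is precisely the coefficient form of the paper's Lemma~\ref{lem1} applied to ${\mathcal F}^\perp$ (expand the three factors there and integrate over $t$). So your starting point coincides with the paper's. The divergence is in how you estimate: you take absolute values of each $\chi(k_i)$ separately, which commits you to bounding $\sum_{m\le k}|\sigma^\perp(m)|$ by $\max_{|z|=\sqrt t}|\mathcal F^\perp(z/q)|$. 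That bound is simply false --- already at $t=0$ the right side equals $|\mathcal F^\perp(0)|=1$, while the left side exceeds $1$ whenever any $\sigma^\perp(m)\neq 0$ for $1\le m\le k$. Even the averaged version you need,
\[
\sum_{k=0}^{n-2}t^k\sum_{m=0}^{k}|\sigma^\perp(m)|\ \le\ \Big(\max_{|z|=\sqrt t}|\mathcal F^\perp(z/q)|\Big)\sum_{k=0}^{n-2}t^k,
\]
cannot be recovered from coefficient-wise information: the best pointwise bound $|\sigma^\perp(m)|\le M(t)\,t^{-m/2}$ (Cauchy) costs an extra factor of roughly $1/(1-\sqrt t)$ after summing, so you lose a logarithm and do not land on the exact integrand $(1-t^{n-1})/(1-t)$. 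Neither of your proposed fixes (i), (ii) addresses this, because once you have replaced $\chi$ and $\sigma^\perp$ by their absolute values there is no phase information left from which to reconstruct $|\mathcal F^\perp|$.

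The paper avoids this by \emph{not} passing to coefficients. It keeps the contour-integral form of the identity, writes the inner integral over $|z|=1/(q\sqrt t)$, and applies Cauchy--Schwarz \emph{there}: the factor $\mathcal F^\perp(tz)$ is pulled out by its sup on $|tz|=\sqrt t/q$, giving $\max_{|z|=\sqrt t}|\mathcal F^\perp(z/q)|$ directly, while the two remaining factors $\sum_j\chi(j)(qz)^j$ and $\sum_j\chi(j)(qtz)^j$ are handled by Parseval on the circle, yielding exactly $\kappa^2\sum_j t^{-j}$ and $\kappa^2\sum_j t^j$. The product of these with the normalization $t^{n/2}$ collapses to $\kappa^2\,t\,(1-t^{n-1})/(1-t)$ with no loss. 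The point is that the $\kappa^2$ should come from an $L^2$ bound on the two logarithmic-derivative factors, not from a pointwise bound on $\chi$; this is what allows $\mathcal F^\perp$ to survive intact.

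Your final paragraph on $\sigma(n)=\sigma^\perp(n)+\chi(n)/n$ is fine.
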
 

Our short proof of Theorem \ref{Halasz} is given in Section 3.   Note the strong parallel between this theorem and the corresponding estimate \eqref{NoH1}.   In our bound for $\sigma(n)$ above, the term $\kappa/n$ in the bound arose from the contribution of irreducibles of degree $n$.  Correspondingly, in \eqref{NoH1} the error term $(\log \log x)^{\kappa}/\log x$ includes the contribution from primes near $x$, but also includes contributions from certain other numbers, and from error terms in truncating Perron integrals, and  these do not arise in the simpler function field setting. 


As in \eqref{PowerSeries},  
 \begin{equation} \label{FasChi}
 {\mathcal F}^\perp(z/q) = \sum_{F \in {\mathcal M}} f^\perp(F) \Big( \frac{z}{q} \Big)^{\text{deg}(F)} = \sum_{n=0}^{\infty} \sigma^\perp(n) z^n   = \exp\Big(\sum_{k=1}^{n-1} \frac{\chi(k)}k z^k \Big),
 \end{equation}
and so one can rephrase the estimate in Theorem \ref{Halasz} as
\begin{equation} \label{NoH7}  
\left| \sigma(n) \right| \le \frac{\kappa^2}{n} 
\int_0^1 \Big(\exp\Big( \max_{|z|=\sqrt{t}} {\rm Re }\sum_{j=1}^{n-1} \frac{\chi(j)}{j} z^j \Big) \Big)\Big(\frac{1-t^{n-1}}{1-t}\Big) dt + \frac{\kappa}{n} . 
\end{equation}

For every fixed real number $\theta$, the function $f_{\theta}(M) = f(M) e(-\theta \ \text{deg}(M))$ is also multiplicative, with  $\chi_{\theta}(k) = \chi(k) e(-k\theta)$, and correspondingly $\sigma_{\theta}(n) = \sigma(n) e(-n\theta)$ (throughout we define $e(t):=e^{2\pi it}$).  This is analogous to the ``twist'' $f(n)n^{-i\theta}$ of a multiplicative function $f$ on the integers. Note that the inequalities in Theorem \ref{Halasz} and \eqref{NoH7}  remain unchanged if we replace $\chi$ by $\chi_\theta$ and $\sigma$ by 
$\sigma_\theta$.

The integral in Theorem \ref{Halasz} can be difficult to work with, and we now give a slightly weaker bound which is simpler to use.
By the maximum modulus principle we may bound $\max_{|z|=\sqrt{t}} |{\mathcal F}^{\perp}(z/q)|$ by $\max_{|z|=1} |{\mathcal F}^{\perp}(z/q)|$.  
Moreover, if $|z|\leq 1$ then 
\[
\log |{\mathcal F}^\perp(z/q)| \leq   \sum_{k=1}^{n-1} \frac{|\chi(k)|}k  \leq \kappa \sum_{k=1}^{n-1} \frac{1}k \leq \kappa \log (2n) ,
\]
so that $ |{\mathcal F}^\perp(z/q)|\leq (2n)^\kappa$.  In Section 3.2,  using these bounds appropriately in Theorem \ref{Halasz} 
we show the following corollary.

\begin{corollary} \label{HalCor} For all integers $n\ge 1$,  we have 
$$ 
\left| \sigma(n) \right|  \le 2\kappa   ( \kappa + 1 + M ) e^{-M} (2n)^{\kappa-1} ,
$$ 
where 
$$ 
 \max_{|z|=\frac{1}{q} } |{\mathcal F}^\perp(z)| =: e^{-M} (2n)^{\kappa}.
 $$ 
\end{corollary}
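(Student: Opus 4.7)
Starting from the integral bound of Theorem~\ref{Halasz}, I would combine two complementary estimates for $R(t) := \max_{|z|=\sqrt{t}} |{\mathcal F}^\perp(z/q)|$. First, by the maximum modulus principle applied to the entire function ${\mathcal F}^\perp(z/q)$, one has $R(t) \le R(1) = e^{-M}(2n)^\kappa$ for every $t \in [0,1]$. Second, the representation ${\mathcal F}^\perp(z/q) = \exp(\sum_{k=1}^{n-1} \chi(k) z^k/k)$ together with $|\chi(k)| \le \kappa$ yields the pointwise trivial bound $R(t) \le \exp(\kappa \sum_{k=1}^{n-1} t^{k/2}/k) \le (1-\sqrt{t})^{-\kappa}$ for $t \in [0,1)$.

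After substituting $s = 1-t$, the integral becomes $\int_0^1 R(1-s)(1-(1-s)^{n-1})/s \, ds$; using $1-\sqrt{1-s} = s/(1+\sqrt{1-s}) \ge s/2$, the trivial bound translates to $R(1-s) \le (2/s)^\kappa$. The two bounds coincide at the threshold $s_* := e^{M/\kappa}/n$, and I would split the integral at $\min(s_*, 1)$: on the left use the maximum-modulus bound, on the right use the trivial bound. (The edge case $s_* > 1$, which arises when $M > \kappa\log n$, reduces to using only the maximum-modulus bound throughout, and is handled via $H_{n-1} \le 1 + \log(n-1) \le 1 + M/\kappa$ together with $e^{-M}(2n)^\kappa \ge R(0) = 1$.)

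For the first piece, $(1-(1-s)^{n-1})/s \le \min(n-1, 1/s)$ gives $\int_0^{s_*} (1-(1-s)^{n-1})/s\, ds \le 1 + \log^+((n-1)s_*) \le 1 + M/\kappa$, so this piece contributes at most $e^{-M}(2n)^\kappa(1 + M/\kappa)$. For the second piece, $(1-(1-s)^{n-1})/s \le 1/s$ yields $2^\kappa \int_{s_*}^1 s^{-\kappa-1}\, ds = (e^{-M}(2n)^\kappa - 2^\kappa)/\kappa$. Summing, $I \le e^{-M}(2n)^\kappa(\kappa+M+1)/\kappa - 2^\kappa/\kappa$; multiplying by $\kappa^2/n$ and adding the residual $\kappa/n$ from Theorem~\ref{Halasz}, the total is $\le \kappa(\kappa+M+1)e^{-M}(2n)^\kappa/n - \kappa(2^\kappa - 1)/n \le 2\kappa(\kappa+M+1)e^{-M}(2n)^{\kappa-1}$, since $2^\kappa \ge 1$.

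The main obstacle is locating the correct crossover $s_* = e^{M/\kappa}/n$ so that all constants line up cleanly: the logarithmic integral on the first piece must yield $1 + M/\kappa$ (rather than $1 + \log n$, which would be too weak precisely when $M$ is small --- the regime where the corollary is sharpest), and the $2^\kappa/\kappa$ savings on the second piece must exactly absorb the residual $\kappa/n$ from Theorem~\ref{Halasz} to leave the clean constant $\kappa + 1 + M$ in the final bound.
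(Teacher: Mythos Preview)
Your proposal is correct and follows essentially the same approach as the paper: combine the maximum-modulus bound with the trivial bound $(1-\sqrt{t})^{-\kappa}$, split the integral at their crossover, and compute each piece so that the $-2^\kappa/\kappa$ savings absorbs the residual $\kappa/n$. The only cosmetic difference is that the paper substitutes $t=(1-u)^2$ (so $1-\sqrt{t}=u$ exactly and the crossover sits at $u=e^{M/\kappa}/(2n)$), whereas you substitute $s=1-t$ and invoke $1-\sqrt{1-s}\ge s/2$; the paper does not treat the edge case $e^{M/\kappa}>2n$ explicitly, so your handling via $H_{n-1}\le 1+M/\kappa$ and $e^{-M}(2n)^\kappa\ge R(0)=1$ is a small bonus.
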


Since $M\geq 0$ we deduce from Corollary \ref{HalCor} the trivial bound $|\sigma(n)| \ll_{\kappa} n^{\kappa - 1}$  (see Remark 5 in Section \ref{examplesremarks} for a more precise estimate). Inserting {\em any} non-trivial lower bound for $M$ will yield a non-trivial bound for $|\sigma(n)|$, which as remarked earlier is the crucial feature of Hal\'{a}sz-type theorems.

In the integer situation, the bound corresponding to Corollary \ref{HalCor} is 
$$ 
\frac 1x \sum_{n\le x} f(n) \ll_\kappa  (1+M)e^{-M}   (\log x)^{\kappa -1} + \frac{(\log \log x)^{\kappa} }{\log x},
$$ 
where 
 $$
 \max_{|t| \le (\log x)^{\kappa}}  \Big| \frac{F^\perp(1+it)}{1+it}\Big| =: e^{-M} (\log x)^{\kappa}.
$$
This is usually stated with $F^\perp(1+it)$ replaced by $F(1+1/\log x+it)$; note that these two quantities are of comparable size (up to multiplicative constants).

\subsection{Lipschitz-type theorems for mean values of multiplicative functions over function fields} 

Mean values of  multiplicative functions in number fields vary slowly with $x$, provided one corrects by the ``rotation" of the form $n^{i\theta}$ that best approximates $f(n)$; that is, one replaces $f(n)$ by the twist 
$f_\theta(n):=f(n)n^{-i\theta}$ where $\theta$ maximizes $|F^\perp(1+i\theta)|$ with $|\theta|\leq\log x$. One can show that the mean value of $f$ up to $x$ equals $x^{i\theta}/(1+i\theta)$ times the mean value of $f_\theta$ plus a small error term. Let us restrict for simplicity to the case $\kappa = 1$. Building on Elliott's work \cite{Elliott}, in \cite{GSDecay} (and see also \cite{GHS}), we obtained the bound
\begin{equation} \label{NoH3} 
   \frac 1{x^{1+\phi}} \sum_{n\leq x^{1+\phi}} f_\theta(n)  - \frac 1{x} \sum_{n\leq x} f_\theta(n)  \ll 
\phi^{1-\frac 2\pi}\log \frac 2\phi ,
\end{equation}  
whenever $\frac{(\log\log x)^{2}}{\log x} \leq \phi \leq   1$. In \cite{GHS} we found examples showing the sharpness of \eqref{NoH3}, up to the factor of $\log 2/\phi$.  Thus the exponent $1-\frac 2\pi$ cannot be increased in general, and we say that $1-\frac 2\pi$ is the {\sl Lipschitz exponent} for mean values of multiplicative functions over the integers.
 
We now give analogous ``Lipschitz estimates" in the function field case, again restricting attention, for simplicity, to functions in ${\mathcal C}(1)$.

\begin{theorem} 
\label{Lipschitz} 
Let $f$ be in the class ${\mathcal C}(1)$, and let $\sigma$ and $\chi$ be defined as in \eqref{6} and \eqref{7}.  Let $n\ge 2$ be given, and let $f^{\perp} = f^{\perp,n}$, $\sigma^{\perp}$, $\chi^{\perp}$, and ${\mathcal F}^{\perp}(z)$ be defined as before. Select $\theta\in [0,1)$ for which $ |{\mathcal F}^\perp(e(-\theta)/q)|$ is maximized.
Then for any integer  $\ell$  with $1\le \ell \le n$, we have
$$ 
 \left|  \sigma_\theta(n+\ell) - \sigma_\theta(n) \right| \ll \Big( \frac{\ell}{n}\Big)^{1-\frac 2\pi} \log \frac{2n} {\ell} + \frac{(\log n)^{O(1)}}{n^{1-c_{m}} }  ,
$$
where $\sigma_\theta(k)=\sigma(k)e(-k\theta)$, and $m$ is the smallest odd integer that does not divide $\ell$, with $c_m:=1/(m\sin(\frac{\pi}{2m}))$.  
\end{theorem}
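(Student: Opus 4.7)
The plan is to adapt the integer-case Lipschitz argument of \cite{GSDecay, GHS} to the function-field setting. By replacing $f$ with its twist $f_\theta(M) = f(M) e(-\theta \deg M)$ (so that $\sigma_\theta(k) = \sigma(k) e(-k\theta)$ and the maximality hypothesis transfers to $f_\theta$), we may assume $\theta = 0$, i.e.\ that $|\mathcal{F}^\perp(z/q)|$ attains its maximum on $|z|=1$ at $z=1$. Next, enlarge the cutoff to degree $n+\ell$ by setting
\[
\mathcal{G}(z/q) := \exp\Big(\sum_{k=1}^{n+\ell} \frac{\chi(k)}{k} z^k\Big),
\]
an entire function whose Taylor coefficients through degree $n+\ell$ agree with those of $\mathcal{F}(z/q)$. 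Since the ratio $\mathcal{G}(z/q)/\mathcal{F}^\perp(z/q) = \exp(\sum_{k=n}^{n+\ell} \chi(k) z^k/k)$ has modulus $O(1)$ on $|z|=1$ (using $\kappa=1$ and $\ell \le n$), Cauchy's integral formula on $|z|=1$ yields
\[
\sigma(n+\ell) - \sigma(n) = \int_0^1 \mathcal{G}(e(\phi)/q)\, e(-n\phi)\, \bigl(e(-\ell\phi) - 1\bigr)\, d\phi,
\]
reducing the Lipschitz estimate to an integral inequality.

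The heart of the argument is a ``pretentious-distance'' analysis. Define $\mathcal{D}(\phi)^2 := \text{Re}\sum_{k=1}^{n-1} \chi(k)(1-e(k\phi))/k$, which is non-negative because $\phi=0$ is the global maximum of $\log |\mathcal{F}^\perp(e(\phi)/q)|$, and which satisfies $|\mathcal{F}^\perp(e(\phi)/q)| = |\mathcal{F}^\perp(1/q)|\, e^{-\mathcal{D}(\phi)^2}$. One then partitions $[0,1)$ according to the distance $\|\ell\phi\|$ to the nearest integer: on the set where $\|\ell\phi\|$ is small, the inequality $|e(-\ell\phi)-1| \ll \|\ell\phi\|$ gives a direct bound, while elsewhere one exploits that $\mathcal{D}(\phi)$ forces $|\mathcal{F}^\perp(e(\phi)/q)|$ to be small, via a local form of Corollary \ref{HalCor}. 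The Lipschitz exponent $1 - 2/\pi$ then emerges from the extremal problem of minimizing $\int_0^1 e^{-\mathcal{D}(\phi)^2} |e(-\ell\phi)-1|\, d\phi$ over admissible $\chi$; the extremal $\chi$ approximates a suitably phased cosine, and an explicit evaluation yields the $(\ell/n)^{1-2/\pi} \log(2n/\ell)$ main term.

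The secondary error $(\log n)^{O(1)}/n^{1-c_m}$ arises from ``spikes'' of $|\mathcal{F}^\perp(e(\phi)/q)|$ near rational points $\phi = a/m$ with small denominator $m$, where the values $\chi(k)$ on the lattice $k \in m\mathbb{Z}$ can conspire to push $|\mathcal{F}^\perp(e(\phi)/q)|$ close to its maximum. Since $e(-\ell\phi) - 1$ vanishes at $\phi = a/m$ exactly when $m|\ell$, only spikes with $m \nmid \ell$ contribute; even denominators reduce to smaller odd ones, so the smallest odd $m \nmid \ell$ controls the leading correction. The corresponding Beurling--Selberg-type extremal problem on a lattice of spacing $m$ evaluates to $c_m = 1/(m \sin(\pi/(2m)))$, and one verifies $c_m \to 2/\pi$ as $m \to \infty$, consistent with the main-term exponent. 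The principal obstacle is this pair of Fourier-analytic extremal inequalities: although the function-field versions should be cleaner than their integer-case analogues in \cite{GSDecay, GHS} (the relevant sums being finite and the Fourier variable genuinely periodic), establishing the exponent $1 - 2/\pi$ still requires a delicate balance between low- and high-frequency contributions to $\mathcal{D}(\phi)^2$.
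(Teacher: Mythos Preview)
Your outline has the right shape --- twist by $\theta$, use Cauchy to represent $\sigma(n+\ell)-\sigma(n)$ as an integral carrying a factor $e(-\ell\phi)-1$, then estimate pointwise --- but the direct integral estimate you propose is too crude. Taking absolute values in
\[
\sigma(n+\ell)-\sigma(n)=\int_0^1 \mathcal G(e(\phi)/q)\,e(-n\phi)\bigl(e(-\ell\phi)-1\bigr)\,d\phi
\]
gives only $\int_0^1 |\mathcal G(e(\phi)/q)|\,|e(-\ell\phi)-1|\,d\phi$. Already for $\chi\equiv 1$ (where the left side is $0$) this integral is of order $1$: near $\phi=0$ one has $|\mathcal G|\asymp \min(n,1/\|\phi\|)$ and $|e(-\ell\phi)-1|\asymp\min(1,\ell\|\phi\|)$, and the range $1/n\le\|\phi\|\le 1/\ell$ alone contributes $\asymp 1$. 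So this route cannot reach the target $(\ell/n)^{1-2/\pi}$ without exploiting the oscillation in $e(-n\phi)$, which your sketch does not do. The paper avoids this by first passing through the triple-convolution identity (Lemma~\ref{lem1}) to obtain Proposition~\ref{Lipschitz2}, which bounds $|\sigma_\theta(n+\ell)-\sigma_\theta(n)|$ in terms of the scalar quantity
\[
L(n,\ell;\theta)=\max_{|z|=1}\Big|(1-z^\ell)\exp\Big(\sum_{j=1}^{n-1}\chi_\theta(j)\,\frac{z^j}{j}\Big)\Big|,
\]
the extra factor $1/n$ coming from the identity and the Cauchy--Schwarz/Parseval step. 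This is the mechanism your sketch is missing.

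Second, you do not actually solve the extremal problem that produces $1-2/\pi$ and $c_m$; you only name it. The paper's argument here is entirely explicit. Writing $z=e(\alpha)$ and using the maximality of $\theta$, one shows (Lemma~\ref{lem2}) that
\[
L(n,\ell;\theta)\le \max_{\alpha}\ |1-e(\ell\alpha)|\,\exp\Big(\sum_{k=1}^{n-1}\frac{|\cos(\pi k\alpha)|}{k}\Big)=:\max_\alpha L^*(n,\ell;\alpha).
\]
The Fourier expansion $|\cos(\pi t)|=\tfrac{2}{\pi}-\tfrac{4}{\pi}\sum_{r\ge 1}\frac{(-1)^r}{4r^2-1}\cos(2\pi rt)$, together with a Dirichlet approximation $\alpha\approx b/m$ with $m\ll\log n$, evaluates $\sum_{k<n}|\cos(\pi k\alpha)|/k$ up to $O(\log m)$ (Lemma~\ref{lem3}); a case analysis over odd versus even $m$ then yields $\max_\alpha L^*(n,\ell;\alpha)\asymp\max\{n^{c_{m_0}}m_0^{O(1)},\,n^{2/\pi}\ell^{1-2/\pi}\}$ with $m_0$ the smallest odd non-divisor of $\ell$ (Corollary~\ref{cor4}). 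Your appeals to a ``Beurling--Selberg-type extremal problem'' and to even denominators ``reducing to smaller odd ones'' are placeholders for these computations; in particular the constant $c_m$ arises not from a lattice extremal problem but directly from the Fourier coefficients of $|\cos|$, and the dichotomy between odd and even $m$ is that $c_m>2/\pi$ for $m$ odd and $c_m<2/\pi$ for $m$ even. Without these concrete steps, and without replacing the lossy absolute-value integral by the Hal\'asz-type Proposition~\ref{Lipschitz2}, the proposal remains an outline rather than a proof.
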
 

Since $\sigma_\theta(n) =\sigma(n) e(-n\theta)$, Theorem \ref{Lipschitz} implies that 
$$ 
\Big| |\sigma(n+\ell)| - |\sigma(n)| \Big| \ll \Big( \frac{\ell}{n}\Big)^{1-\frac 2\pi} \log \frac{2n} {\ell} + \frac{(\log n)^{O(1)}}{n^{1-c_{m}} }.
$$ 

Note that $1 - c_m$ increases to $1 - 2/\pi$ as $m \rightarrow \infty$ through odd values. The first term of the upper bound in Theorem \ref{Lipschitz} corresponds to the upper bound in \eqref{NoH3}, but the second term indicates a new phenomenon, which has no parallel in the integer situation. 
In Section \ref{Construct}  we will construct examples, for each odd $m>1$, of $f$ for which
$| \sigma_\theta(n+\ell) - \sigma_\theta(n) |\gg 1/n^{1-c_m}$ for a positive proportion  of  integers $n$. 
For instance, if $m=3$  we take  $\chi(n)=1$ if $3$ divides $n$, and $\chi(n)=-1$ otherwise. Then one can show that
$\sigma(3n)=-\sigma(3n+1)\sim 1/(\Gamma(\frac 23)n^{1/3})$, whereas $\sigma(3n+2)=0$. Note here that $1-c_3=\frac 13$.
   
When $\ell$ is large, the first term in the Lipschitz bound dominates the second, and one can construct examples in which the exponent $1-2/\pi$ is attained.  Since this situation is in complete analogy with the situation for integers (see \cite{GHS}), we do not carry out this construction here.


\section{Some Examples and Remarks}\label{examplesremarks}

\noindent In this section we collect together some remarks on our class ${\mathcal C}(\kappa)$ and 
offer various motivating examples of functions in this class. The proofs of our theorems are deferred to the subsequent sections.
\medskip

\noindent {\bf Remark 1}.  The \emph{Dirichlet convolution} of $f_1 \in {\mathcal C}(\kappa_1)$ and $f_2 \in {\mathcal C}(\kappa_2)$ is given by   $(f_1* f_2)(F) := \sum_{AB=F} f_1(A) f_2(B)$, and lies in the class ${\mathcal C}(\kappa_1+\kappa_2)$.  The \emph{Rankin--Selberg convolution}, $f_1 \times f_2$,  is defined by setting $\Lambda_{f_1\times f_2}(M) =0$ unless $M$ is a prime power, in which case $\Lambda_{f_1\times f_2}(M) \Lambda(M) = \Lambda_{f_1}(M) \Lambda_{f_2}(M)$, so that $f_1 \times f_2$ is a 
multiplicative function in the class ${\mathcal C}(\kappa_1 \kappa_2)$.  The function $f_1 \times f_2$ matches 
the product $f_1f_2$ on squarefree $M$, but the two functions differ on prime powers $P^k$ with $k>1$ with the Rankin--Selberg convolution being the more natural choice.
\medskip

\noindent {\bf Remark 2}.  Let $f\in {\mathcal C}(\kappa)$, and let the averages $\sigma$ and $\chi$ be as in \eqref{6} and 
\eqref{7}.  Note that $\chi(0)=0$ and $|\chi(n)|\le \kappa$ for all $n$.  We now prove the convolution relation \eqref{2.1} satisfied by $\sigma$ and $\chi$.  First note that 
$$ 
f(F) \text{deg}(F) = \sum_{D | F} \Lambda_f(D) f(F/D), 
$$ 
which follows upon comparing the two sides of the relation $z{\mathcal F}^{\prime}(z) = 
(z{\mathcal F}^{\prime}/{\mathcal F}(z)){\mathcal F}(z)$.  Taking the average over $F \in {\mathcal M}_n$ gives
\begin{align*}
n \sigma(n) &= \frac{1}{q^n} \sum_{F \in {\mathcal M}_n} f(F) \text{deg}(F) = \frac{1}{q^n} \sum_{F \in {\mathcal M}_n}  \sum_{D|F } \Lambda_f(D) f(F/D)
\\
&= \sum_{k=1}^{n} \Big( \frac{1}{q^k} \sum_{D \in {\mathcal M}_k} \Lambda_f(D)\Big) \Big(\frac{1}{q^{n-k}} \sum_{M \in {\mathcal M}_{n-k}} f(M) \Big). 
\end{align*}
In other words we have the convolution identity 
\begin{equation*} 
n\sigma(n) = \sum_{k=1}^{n} \chi(k) \sigma(n-k). 
\end{equation*}
As discussed in the introduction, this is a simpler, discrete version of the integral equation
\[
u\sigma(u) = \int_0^u \chi(t) \sigma(u-t) dt 
\]
that occurs for number field mean values, which was first considered by Wirsing, and discussed further in \cite{GSSpectrum}. 
\medskip

\noindent{\bf Remark 3}.  The convolution identity \eqref{2.1} shows that $\sigma(n)$, the average of $f$ over elements of ${\mathcal M}_n$, depends only 
on the $\chi$-values, which are the average of $f$ taken over suitable prime powers, {\em but not on the individual $f(P^\ell)$}.  {\em Thus there is no 
loss in generality in assuming that $f(P^\ell)=\chi(k)$ whenever $\deg(P^\ell)=k$}. We will use this observation repeatedly in the sequel when discussing and constructing examples.  Further, this observation means that we can view Theorem \ref{Halasz} purely as a result in analysis: given information about the coefficients of the power series $\sum_{k=1}^{\infty} \chi(k)z^k/k$, it obtains information about the coefficients of the series $\sum_{n=0}^{\infty} \sigma(n) z^n = \exp(\sum_{k=1}^{\infty} \chi(k)z^k/k)$.  

 \medskip

\noindent {\bf Example 4}. In the introduction, we saw that if $f(P^k) =1$ for all prime powers $P^k$ then 
${\mathcal F(z)}=(1-qz)^{-1}$, and $\chi(k) = 1$ for all $k\ge 1$, and $\sigma(n)=1$ for all $n \ge 0$.  

Generalizing this, we consider the construction where $\chi(k)=\alpha$ for some fixed $\alpha\in {\Bbb C}$, and all $k\ge 1$.  We find (either by solving 
the recurrence \eqref{2.1}, or by noting that the corresponding generating function ${\mathcal F}(z)$ equals $(1-qz)^{-\alpha}$) that 
\begin{equation} 
\label{2.2}
\sigma(n) = \binom{\alpha+n-1}{n}  = \frac{\alpha (\alpha+1)\cdots (\alpha+n-1)}{n!}.
\end{equation} 
Therefore $\sigma(n) \sim n^{\alpha-1}/\Gamma(\alpha)$ for large $n$. This is analogous to the  Selberg-Delange theorem, which gives asymptotics for $\sum_{n\le x} d_z(n)$ where  $\zeta(s)^{z}=\sum_{n\geq 1}  d_z(n)/n^s$.  

When $\alpha =k \in {\Bbb N}$, the example above deals with the $k$-divisor function over ${\Bbb F}_q[x]$, 
and \eqref{2.2} gives the average number of ways of writing a polynomial $F$ of degree $n$ as the product $F_1\cdots F_k$.   The case $\alpha = -1$ deals with the analog of the M{\" o}bius function: $f(P)=-1$ for 
irreducibles $P$, and $f(P^\ell)=0$ for $\ell \ge 2$.  Finally, note that when $\alpha=-k$ is a negative integer then 
$\sigma(n) =0$ for all $n\ge k+1$.  
\medskip

\noindent{\bf Remark 5}.  If $f\in {\mathcal C}(\kappa)$, then by induction using \eqref{2.1} we see that for all $n\ge 0$, 
$$ 
|\sigma(n)| \le \binom{\kappa +n-1}{n}.
$$ 
\medskip

\noindent {\bf Example 6}. Smooth polynomials.  A $y$-\textsl{smooth integer} is a positive integer $n$, all of whose prime factors are $\leq y$. The indicator function of $y$-smooth integers is the completely multiplicative function $f$ for which $f(p)=1$ if $p\leq y$, and $f(p)=0$ otherwise.  It is known that for a wide range of $u\geq 1$ there are $\sim \rho(u)y^u$ $y$-smooth integers up to $y^u$, where $\rho(u) = e^{-(1+o(1))u\log u}$ is the Dickman function (which equals $1$ for $0\leq u\leq 1$, and is defined by $u\rho(u)=\int_0^1 \rho(u-t)dt$ for $u>1$).

Analogously an $m$-\textsl{smooth polynomial} is one all of whose irreducible factors have degree $\leq m$.
Consider the construction where $\chi(\ell) =1$ if $1\le \ell \le m$ and $\chi(\ell)=0$ for $\ell >m$. If we determine $\sigma(\cdot)$ using \eqref{2.1} then $N(n,m)$, the number of $m$-smooth monic 
polynomials of degree $n$, satisfies $N(n,m)\geq \sigma(n) q^n$.  Now $\sigma(n) =1=\rho(n/m)$ for $0\le n\le m$.  By an induction hypothesis and \eqref{2.1} one can then deduce that $\sigma(n)\ge \rho(n/m)$ for all $n$, as follows:   
$$ 
\sigma(n) \ge \frac{1}{n} \sum_{\ell = 1}^{m} \rho\Big(\frac{n-\ell}{m}\Big) \ge \frac 1n \sum_{\ell=1}^{m} \int_{\ell-1}^{\ell} \rho\Big(\frac{n-t}{m}\Big) dt = \frac 1n \int_0^m \rho\Big( \frac{n-t}{m} \Big) dt = \rho\Big( \frac{n}{m}\Big) ,
$$ 
the final equality holding since $u\rho(u) = \int_{u-1}^{u} \rho(v) dv$ for $u \geq 1$. In fact there is some room to spare in the lower bound above, and one can show that 
$$ 
\sigma(n) \ge \rho (n/m) \exp \Big( \frac{c}{m} \Big\lfloor \frac nm\Big\rfloor \Big), 
$$ 
for some positive constant $c$.   This implies that in the function field case, the Dickman function is {\sl not} a good approximation to $N(n,m)$ if $n \approx m^2$ (whereas it {\sl is} a good approximation in the corresponding range $u\approx \log y$ for the $y$-smooth integer counting problem).
A similar phenomenon occurs when we count ``smooth permutations"; that is, elements of $S_n$ composed of cycles of length at most $m$.  
\medskip

 \noindent {\bf Remark 7}.  In Example 4 we saw that if $\chi(\ell)=-k$ is a negative integer 
 for all $\ell \ge 1$ then $\sigma(n)$ equals zero for all $n\ge k+1$.  We now consider the converse situation: 
 if  $\sigma(n)=0$ for all $n\ge k+1$ (where $k\geq 0$), what can we conclude about $\chi(\ell)$?   Our assumption implies that
$$ 
{\mathcal F}(z) = \sum_{n=0}^{\infty} \sigma(n) (qz)^n = \sum_{n=0}^{k} \sigma(n) (qz)^{n} 
$$ 
is a polynomial of degree $k$.  Factoring ${\mathcal F}$ into its roots we obtain 
$$ 
{\mathcal F}(z) = \prod_{j=1}^{k} (1-z\alpha_j)
$$ 
for some complex numbers $\alpha_j$.   Since $\log {\mathcal F}(z) = \sum_{\ell=1}^{\infty} \frac{\chi(\ell)}{\ell} (qz)^{\ell}$ is holomorphic in the region $|z| <1/q$,  we see that $|\alpha_j| \le q$, or in other words all the zeros of ${\mathcal F}$ lie in $|z|\ge 1/q$.   Further we have 
$$ 
q^{\ell} \chi(\ell) = - \sum_{j=1}^{k} \alpha_j^{\ell}. 
$$ 
If $|\chi(\ell)|\le \kappa$ for all $\ell$, it follows that there can be at most $\kappa$ values of $\alpha_j$ with $|\alpha_j|=q$, and the rest are strictly smaller than $q$ in magnitude.  

If $f \in {\mathcal C}(\kappa)$ with $\kappa<1$ satisfies $\sigma(n)=0$ for $n \ge k+1$, then from the above we 
conclude that $\chi(\ell)$ must decrease exponentially for large $\ell$.  If $\kappa =1$, then either $|\alpha_j|<q$ for all $j$, 
in which case $\chi(\ell)$ once again decreases exponentially for large $\ell$, or $|\alpha_j| =q$ for some $j$ (say $j=1$).  
In the latter case, we may use Dirichlet's theorem to find $\ell$ such that all the $\alpha_j^{\ell}$ (for $1\le j\le k$) have 
argument in $(-\pi/8,\pi/8)$ say, and this forces $k=1$ (else one would find an $\ell$ with $|\chi(\ell)| >1$).  Thus in this 
case one must have ${\mathcal F}(z) = (1-qze(\theta))$ for some $\theta$;   in other words, {\sl the only possibility for $f$ is a twist of the M{\" o}bius function by some $\theta$}.   This is a simple analog of a striking converse theorem of Koukoulopoulos \cite{Koukou} for multiplicative functions over the integers.  It may be interesting to work out a precise analog of his result, which would involve imposing (given $f\in {\mathcal C}(1)$) 
the weaker restriction $|\sigma(n)| \ll n^{-2-\delta}$ for some $\delta>0$, and deriving a similar dichotomy for the behavior of $\chi(\ell)$.   

We proved that for any $\kappa>0$, at most $\lfloor \kappa\rfloor$ of the $\alpha_j$ can have size $q$. Here too, one would like to replace the condition that $\sigma(n)=0$ for large $n$, by a weaker condition like $\sigma(n) \ll n^{-A}$ for some $A=A(\kappa)$.   Koukoulopoulos and the third author have taken some first steps in this direction for multiplicative functions over the integers.  
\medskip

\noindent {\bf Remark 8.}   One of the main results in \cite{GSSpectrum} states that if $f: {\Bbb N} \to \{-1, 1\}$ is a completely multiplicative 
function, then for large $x$ one has $\sum_{n\le x} f(n) \ge (\delta_1 +o(1))x$, where 
$$ 
\delta_1 = 1 - 2\log (1+\sqrt{e}) + 4\int_1^{\sqrt{e}} \frac{\log t}{t+1} dt = -0.656999\ldots , 
$$ 
and the constant $\delta_1$ is optimal.   The exact parallel of this result is false in the function field setting: the function $f(F)= (-1)^{\text{deg}(F)}$ is completely multiplicative, and here $\sigma(n) = (-1)^n$.  It would be interesting to develop the right version of this result in the function field setting. Perhaps parity is the only substantial obstruction to such a result? 
\medskip
 
 \noindent {\bf Example 9}.  Let $e(\alpha_1)$, $\ldots$, $e(\alpha_k)$ be distinct points on the unit circle, and 
 let $a_1$, $\ldots$, $a_k$ be complex numbers, all bounded by $1$ say.  An interesting  class of examples 
 is given by setting (for $n\ge 1$) 
 $$ 
 \chi(n) = \sum_{j=1}^{k} a_j e(-n\alpha_j).
 $$ 
 Here, by \eqref{PowerSeries}, 
 $$
 \sum_{n= 0}^{\infty} \sigma(n) z^n = : {\mathcal F}(z/q) = \prod_{j=1}^{k} (1-ze(-\alpha_j))^{-a_j}. 
 $$ 
 By matching up the coefficients of the $(1-ze(-\alpha_j))^{-a_j}$,  we may construct a function 
 $$
 {\mathcal G}(z) = \sum_{j=1}^{k} \Big( C_0(j) (1-ze(-\alpha_j))^{-a_j} + C_1(j) (1-ze(-\alpha_j))^{1-a_j}\Big), 
 $$ 
 such that ${\mathcal F}(z/q) -{\mathcal G}(z)$, and its first derivative are bounded uniformly in $|z|<1$ (the bound may 
 depend on the $e(\alpha_j)$'s and  $a_j$, but remains uniform as $|z| \to 1$).   Here, for example,
 $$ 
C_0(j) = \prod_{\ell \neq j} (1-e(\alpha_j-\alpha_{\ell}))^{-a_{\ell}} 
$$ 
for each $j$, and the $C_1(j)$ are given by a similar but more complicated expression.  Since the first derivative of ${\mathcal F}(z/q) -{\mathcal G}(z)$ is bounded uniformly in $|z|<1$, it follows that the $n$-th coefficient of ${\mathcal F}(z/q)-{\mathcal G}(z)$ is $O(1/n)$.  
Thus we conclude that 
 \begin{align*}
 \sigma(n) &=\sum_{j=1}^{k} \Big( C_0(j) e(-n\alpha_j) \binom{a_j+n-1}{n} + C_1(j) e(-n\alpha_j) \binom{a_j-1+n-1}{n} \Big)+O\Big(\frac{1}{n}\Big)\\
 &= \sum_{j=1}^{k} C_0(j) e(-n\alpha_j) \frac{n^{a_j-1}}{\Gamma(a_j)} + O \Big(\frac{1}{n}\Big),
 \end{align*} 
 since each $|a_j|\leq 1$.

\section{Proofs of Theorem \ref{Halasz} and Corollary \ref{HalCor}} 

\noindent The key to our proof of Hal\'asz's Theorem over function fields, as well as over number fields, is an identity, given in Lemma \ref{lem1} below. As discussed in \cite{GHS}, the crucial feature of this identity is the presence of three generating functions in the integral on the right-hand side, which will allow us to bound that integral efficiently. There is a strong analogy with additive number theory, where ternary problems are accessible to harmonic analysis techniques (such as the circle method) but binary problems are usually not.
 
\begin{lemma} \label{lem1}  Let $f$ be any multiplicative function in the class ${\mathcal C}(\kappa)$, and let 
${\mathcal F}(z)$ be as in \eqref{3}.  Let $r$ be a positive real number with $r <1/q$.  Then 
$$ 
\sum_{M \in {\mathcal M}_n} f(M) = \frac{1}{n} \sum_{M \in {\mathcal M}_n} \Lambda_f(M) + \frac 1n \int_0^1 
\frac{1}{2\pi i} \int_{|z|=r} \Big(z\frac{{\mathcal F}^{\prime}}{{\mathcal F}}(z)\Big) 
\Big( tz \frac{{\mathcal F}^{\prime}}{{\mathcal F}} (tz) \Big) {\mathcal F}(tz) \frac{dz}{z^{n+1}} \frac{dt}{t}. 
$$ 
\end{lemma}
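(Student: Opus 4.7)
The plan is to interpret the contour integral as coefficient extraction, evaluate the $t$-integral term by term, and then apply the convolution identity from Remark 2 twice. To set notation, I would write $L_k := \sum_{\deg F = k} \Lambda_f(F) = q^k \chi(k)$ and $S_k := \sum_{F \in \mathcal{M}_k} f(F) = q^k \sigma(k)$, so that by \eqref{3} and \eqref{4} the three factors appearing in the integrand have power series $\mathcal{F}(z) = \sum_{c \ge 0} S_c z^c$ and $z\mathcal{F}'/\mathcal{F}(z) = \sum_{a \ge 1} L_a z^a$, both absolutely convergent for $|z| < 1/q$. The hypothesis $f \in \mathcal{C}(\kappa)$ and the choice $r < 1/q$ ensure uniform absolute convergence on $|z|=r$ and on $|z|=rt$ for every $t \in [0,1]$, which will justify all the term-by-term manipulations below.

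Next, I would multiply out the three series, use $\frac{1}{2\pi i}\int_{|z|=r} z^{a+b+c-n-1}\,dz = \mathbf{1}_{a+b+c=n}$, and then integrate in $t$. The inner contour integral gives
\begin{equation*}
\frac{1}{2\pi i}\int_{|z|=r}\Big(z\tfrac{\mathcal F'}{\mathcal F}(z)\Big)\Big(tz\tfrac{\mathcal F'}{\mathcal F}(tz)\Big)\mathcal F(tz)\frac{dz}{z^{n+1}} \;=\; \sum_{\substack{a+b+c=n\\ a,\,b\ge 1,\ c\ge 0}} L_a L_b S_c\, t^{b+c},
\end{equation*}
and since $b \ge 1$ forces $b+c \ge 1$, the $t$-integral $\int_0^1 t^{b+c-1}\,dt = 1/(n-a)$ is harmless. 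Thus the double integral in the lemma reduces to
\begin{equation*}
\frac{1}{n}\sum_{a=1}^{n-1} \frac{L_a}{n-a}\sum_{b=1}^{n-a} L_b\, S_{n-a-b}.
\end{equation*}

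At this point I would invoke the convolution identity from Remark 2 (i.e.\ \eqref{2.1}, rewritten in unnormalized form as $m S_m = \sum_{b=1}^{m} L_b S_{m-b}$) with $m = n-a$, which collapses the inner sum to $(n-a)S_{n-a}$ and leaves $\frac{1}{n}\sum_{a=1}^{n-1} L_a S_{n-a}$. Adding the boundary term $\frac{1}{n}L_n = \frac{1}{n}\sum_{M \in \mathcal{M}_n}\Lambda_f(M)$ on the right-hand side of the lemma and applying the convolution identity once more, now with $m=n$, gives $\frac{1}{n}\sum_{a=1}^{n} L_a S_{n-a} = S_n = \sum_{M \in \mathcal{M}_n} f(M)$, matching the left-hand side. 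The only technicality is justifying the interchange of summation with the two integrations, which is immediate from absolute convergence together with $|t^{b+c}| \le 1$; there is no substantive obstacle in the derivation. The real content of the lemma is its shape: the presence of a \emph{third} factor $\mathcal{F}(tz)$, introduced essentially for free by the $t$-integration, converts what would be a binary convolution into a ternary one whose three factors can be bounded independently in the proof of Theorem \ref{Halasz}.
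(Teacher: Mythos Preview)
Your argument is correct. The difference from the paper is one of direction: the paper derives the identity analytically, starting from Cauchy's formula $\sum_{M\in\mathcal M_n} f(M)=\frac1n\cdot\frac{1}{2\pi i}\int_{|z|=r} z\mathcal F'(z)\,\frac{dz}{z^{n+1}}$ and then inserting the telescoping identity $\mathcal F(z)=1+\int_0^1\tfrac{d}{dt}\mathcal F(tz)\,dt$ (rewritten as $\int_0^1 (tz\,\mathcal F'/\mathcal F)(tz)\,\mathcal F(tz)\,\tfrac{dt}{t}$) inside $z\mathcal F'(z)=(z\mathcal F'/\mathcal F)(z)\,\mathcal F(z)$. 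No series expansion or use of \eqref{2.1} is needed; the ternary structure appears directly from differentiating $t\mapsto\mathcal F(tz)$. Your route goes the other way: you expand the right-hand side in coefficients, perform the $t$-integral term by term, and then recognise the resulting nested sum as two applications of the convolution identity \eqref{2.1}. This is slightly longer but has the virtue of making transparent exactly \emph{why} the right-hand side equals $S_n$ at the level of coefficients, whereas the paper's manipulation makes transparent \emph{where} the formula comes from. Either way the only analytic content is the absolute convergence on $|z|=r<1/q$, which you handle correctly (the potential singularity of $dt/t$ at $t=0$ is cancelled by the factor $t^{b+c}$ with $b\ge 1$).
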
 
\begin{proof} By Cauchy's formula we may write, for any $0<r<1/q$,
\begin{equation}
\label{3.1}
\sum_{M\in {\mathcal M}_n} f(M) = \frac{1}{n} \frac{1}{2\pi i} \int_{|z|=r} z {\mathcal F}^{\prime}(z) \frac{dz}{z^{n+1}}. 
\end{equation} 
Now we write 
\begin{align*}
z{\mathcal F}^{\prime}(z) &= \Big(z\frac{{\mathcal F}^{\prime}}{{\mathcal F}}(z) \Big) {\mathcal F}(z) = 
\Big(z\frac{{\mathcal F}^{\prime}}{{\mathcal F}}(z) \Big) \Big( 1+ \int_0^1 \frac{d}{dt} {\mathcal F}(tz) dt\Big)\\
&= 
\Big( z\frac{{\mathcal F}^{\prime}}{{\mathcal F}}(z) \Big)+ \Big( z\frac{{\mathcal F}^{\prime}}{{\mathcal F}}(z) \Big) 
\int_0^1 \Big(tz \frac{{\mathcal F}^{\prime}}{{\mathcal F}}(tz) \Big) {\mathcal F}(tz) \frac {dt}{t}, 
\end{align*}
and use this expression in \eqref{3.1}.   The first term above gives 
$$ 
\frac{1}{n} \frac{1}{2\pi i} \int_{|z|=r} \Big( z\frac{{\mathcal F}^{\prime}}{{\mathcal F}}(z) \Big) \frac{dz}{z^{n+1} } 
= \frac{1}{n} \sum_{M\in {\mathcal M}_n} \Lambda_f(M), 
$$ 
matching the first term in the right-hand side of the lemma.  The second term gives, upon interchanging the integrals over $z$ and $t$, the other term in the right-hand side of the lemma. 
\end{proof}

In \cite{GHS}, we use an analogous ``triple convolution'' identity  in our proof of  Hal\'asz's Theorem over number fields, but the key analytic technique there is Perron's formula rather than  Cauchy's formula, which leads to several additional complications.

 \subsection{Proof of Hal\'asz's Theorem in function fields}  
 
Theorem \ref{Halasz} clearly holds when $n=1$, and so we suppose below that $n\ge 2$.  
Since $\sigma(n)$ depends 
 only on the values of $f$ on prime powers with degree at most $n$, we are motivated to use the multiplicative 
 function $f^\perp (=f^{\perp ,n})$ as described in the introduction.  We recall that $\sigma^\perp(j) = \sigma(j)$ 
 for all $j\le n-1$, and that $\sigma^\perp(n) = \sigma(n) - \chi(n)/n$, and note that ${\mathcal F}^{\perp}(z)$ is an entire function for all $z\in {\Bbb C}$.

 Now use Lemma \ref{lem1} with ${\mathcal F}$ replaced by ${\mathcal F}^\perp$ there.  From our 
 observations above, we obtain (with ${\mathcal F}^{\perp \prime}(z)$ denoting the derivative of ${\mathcal F}^{\perp}(z)$)
 \begin{equation} 
 \label{3.2} 
 \sigma(n) -\frac{\chi(n)}{n} = \sigma^\perp(n) = \frac{q^{-n}}{n} \int_0^1 
 \frac{1}{2\pi i} \int_{|z|=r} \Big(z\frac{{\mathcal F}^{\perp \prime}}{{\mathcal F}^\perp} (z) \Big) \Big( tz \frac{{\mathcal F}^{\perp \prime}}{{\mathcal  F}^\perp}(tz)\Big) {\mathcal F}^\perp(tz) \frac{dz}{z^{n+1}} \frac{dt}{t}. 
 \end{equation} 
By \eqref{FasChi} we see $u{\mathcal F}^{\perp \prime}/{\mathcal F}^\perp(u) = \sum_{j=1}^{n-1} \chi(j) (qu)^j$ is a finite sum, so we may take 
the inner integral over $z$ in \eqref{3.2} to be over the circle with radius $1/(q\sqrt{t})$, and obtain 
\begin{equation} \label{3.3} 
\sigma(n)-\frac{\chi(n)}{n} = 
\frac{q^{-n}}{n} \int_0^1 \frac{1}{2\pi i} \int_{|z|=\frac{1}{q\sqrt{t}}} \Big( \sum_{j=1}^{n-1} \chi(j) (qz)^{j}\Big) \Big( \sum_{j=1}^{n-1} \chi(j)(qtz)^{j}\Big) {\mathcal F}^\perp(tz) \frac{dz}{z^{n+1}} \frac{dt}{t}. 
\end{equation}

Now consider the inner integral in \eqref{3.3}.  Using Cauchy--Schwarz we see that 
\begin{align}\label{3.4}
\Big| \frac{1}{2\pi i} \int_{|z|=\frac{1}{q\sqrt{t}}}  \Big( \sum_{j=1}^{n-1} \chi(j) (qz)^{j}\Big) \Big( \sum_{j=1}^{n-1} \chi(j)(qtz)^{j}\Big)& {\mathcal F}^\perp(tz) \frac{dz}{z^{n+1}} \Big| \nonumber\\
\le (q\sqrt{t})^n \Big( \max_{|z|= \frac{1}{q\sqrt{t}} } |{\mathcal F}^\perp(tz)| \Big) &
\Big( \frac{1}{2\pi } \int_{|z|= \frac{1}{q\sqrt{t}}} \Big| \sum_{j=1}^{n-1} \chi(j) (qz)^{j} \Big|^2 \frac{|dz|}{|z| } \Big)^{\frac 12} \nonumber\\
&\times \Big( \frac{1}{2\pi} \int_{|z|=\frac{1}{q\sqrt t} } \Big|\sum_{j=1}^{n-1} \chi(j) (qtz)^j \Big|^2 \frac{|dz|}{|z|}\Big)^{\frac 12}.   
\end{align} 
By Parseval, and since $|\chi(j)| \le \kappa$ for all $j$, we have 
\begin{equation} 
\label{3.5}
\frac{1}{2\pi } \int_{|z|=R} \Big| \sum_{j=1}^{n-1} \chi(j) (qz)^j \Big|^2 \frac{|dz|}{|z|} = \sum_{j=1}^{n-1} |\chi(j)|^2 (qR)^{2j} \le \kappa^2 \sum_{j=1}^{n-1} (qR)^{2j}. 
\end{equation}
Inserting this into \eqref{3.4}, we deduce that  \eqref{3.4} is 
$$ 
\le \kappa^2  (q\sqrt{t})^n \Big( \max_{|z|=\frac{\sqrt{t}}{q}} |{\mathcal F}^\perp(z)|\Big) 
\Big( \sum_{j=1}^{n-1} t^{-j}\Big)^{\frac 12} \Big( \sum_{j=1}^{n-1} t^j \Big)^{\frac 12}
= \kappa^2 q^n t \Big( \frac{1-t^{n-1}}{1-t}\Big)\Big( \max_{|z|=\frac{\sqrt{t}}{q}} |{\mathcal F}^\perp(z)|\Big). 
$$
Inserting this into \eqref{3.3} yields Theorem \ref{Halasz}.

  \subsection{Proof of Corollary \ref{HalCor}} 

As mentioned in the introduction, the maximum modulus principle gives
$$ 
\max_{|z|=\frac{\sqrt{t}}{q} } |{\mathcal F}^\perp(z)| \leq  \max_{|z|=\frac{1}{q} } |{\mathcal F}^\perp(z)| =: e^{-M} (2n)^{\kappa} 
 $$ 
for $0\leq t\leq 1$. Moreover, by definition when $|z| < 1/q$ we have
  \[
 \log |{\mathcal F}^\perp(z)|   = \text{Re} \Big( \sum_{k=1}^{n-1} \frac{\chi(k)}k (qz)^k \Big)  
 \leq \kappa \sum_{k=1}^{n-1} \frac{(q|z|)^k}k  \leq \kappa \sum_{k\geq 1}  \frac{(q|z|)^k}k= -\kappa \log (1-q|z|) .
 \] 
 Therefore
 \begin{equation} 
 \label{3.6} 
 \max_{|z|=\frac{\sqrt{t}}{q}} |{\mathcal F}^\perp(z)| \le \min \Big( e^{-M} (2n)^{\kappa}    , (1-\sqrt{t})^{-\kappa}\Big). 
 \end{equation}    
    
Taking $t= (1-u)^2$, and using \eqref{3.6}, we obtain  
\begin{align*}
& \kappa^{2} \int_0^1 \Big(  \max_{|z|=\frac{\sqrt{t}}{q}} |  {\mathcal F}^\perp(z) |\Big)  \Big(\frac{1-t^{n-1}}{1-t}\Big) dt \\
&\le \kappa^2 \int_0^1 \min \Big( e^{-M} (2n)^{\kappa}, u^{-\kappa}\Big )  \min 
\Big( (n-1),\frac{1}{u(2-u)} \Big) (2(1-u)) du\\
&\le \kappa^2 \int_0^1 \min \Big(e^{-M} (2n)^{\kappa}, u^{-\kappa}\Big )  \min 
\Big( 2n,\frac{1}{u} \Big)   du  \\
&\le \kappa^2 \Big( 
\int_0^{1/2n} 2n e^{-M} (2n)^{\kappa}  du
+ \int_{1/2n}^{e^{M/\kappa}/2n} e^{-M} (2n)^{\kappa} \frac{du}{u}
+ \int_{e^{M/\kappa}/2n} ^1  u^{-\kappa-1}    du
\Big) \\
&= \kappa^2 \Big(  e^{-M} (2n)^{\kappa}  +  e^{-M} (2n)^{\kappa} \frac{M}{\kappa} +  \frac{e^{-M} (2n)^{\kappa}-1}\kappa \Big)\\
&=  \kappa e^{-M} (2n)^{\kappa}  (  \kappa  +  1  +   M  ) - \kappa .
\end{align*}  
Substituting this bound into Theorem \ref{Halasz} yields Corollary \ref{HalCor}.

\section{Lipschitz estimates: A key proposition}  

\noindent Throughout this section, we restrict attention to $f\in {\mathcal C}(1)$, and prove an appropriate modification of Corollary \ref{HalCor} to bound the difference $\left|  \sigma_\theta(n+\ell) - \sigma_\theta(n) \right|$.

\begin{proposition} 
\label{Lipschitz2} 
Let $f$ be in the class ${\mathcal C}(1)$, and let $\sigma(n)$ and $\chi(n)$ be defined as in \eqref{6} and \eqref{7}.   Let $n\ge 2$. 
Define, for a given $\theta \in {\Bbb R}/{\Bbb Z}$, 
$$ 
L(n,\ell;\theta) = \max_{|z| = 1} \Big| (1 - z^{\ell} ) \ \exp\Big( \sum_{j=1}^{n-1} \chi_\theta(j) \frac{z^j}{j}\Big) \Big|.
$$ 
For any integer  $\ell\geq 1$, we have (recall $\sigma_\theta(k) =\sigma(k) e(-k\theta)$)
$$ 
\left|  \sigma_\theta(n+\ell) - \sigma_\theta(n) \right| \ll \frac{\ell}{n} + \frac{L(n,\ell;\theta)}{n} \Big(1+ \log \frac{2n}{L(n,\ell;\theta)}\Big).
$$
\end{proposition}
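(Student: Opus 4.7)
My plan is to mimic the proof of Theorem \ref{Halasz} and Corollary \ref{HalCor}, but applied to the twisted power series $\mathcal{G}^\perp(z) := (1-z^\ell)\Phi^\perp(z)$, where I set $\Phi^\perp(z) := \mathcal{F}^\perp(z/q) = \exp(\sum_{k=1}^{n-1}\chi(k)z^k/k)$ and recall $L^\perp(z) := z\Phi^{\perp\prime}(z)/\Phi^\perp(z) = \sum_{k=1}^{n-1}\chi(k)z^k$. By definition $L(n,\ell;0) = \max_{|z|=1}|\mathcal{G}^\perp(z)|$, and $[z^{n+\ell}]\mathcal{G}^\perp(z) = \sigma^\perp(n+\ell) - \sigma^\perp(n)$. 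First I would reduce to $\theta = 0$ by replacing $f$ by $f_\theta$, which preserves $\mathcal{C}(1)$ and transforms $\chi,\sigma$ into $\chi_\theta,\sigma_\theta$.

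Next I would bridge between $\sigma$ and $\sigma^\perp$. Factoring $\Phi(z) := \mathcal{F}(z/q) = \Phi^\perp(z)\cdot(1+E(z))$, where $E(z) := \exp(\sum_{k\geq n}\chi(k)z^k/k) - 1$, we have $(1-z^\ell)\Phi(z) = \mathcal{G}^\perp(z) + \mathcal{G}^\perp(z)E(z)$. Since $[z^m]E(z) = 0$ for $m<n$ and $[z^m]E(z) = \chi(m)/m$ for $n\leq m<2n$, a short computation using $|\chi(k)|\leq 1$ and $|\sigma(j)|\leq 1$ (Remark 5 with $\kappa=1$) yields
\[
|[z^{n+\ell}](\mathcal{G}^\perp(z)E(z))| \leq \sum_{j=0}^{\ell-1}\frac{|\sigma(j)|}{n+\ell-j} + \frac{|\sigma(\ell)-1|}{n} \ll \frac{\ell}{n},
\]
which accounts for the $\ell/n$ term in the proposition. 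So it suffices to bound $\sigma^\perp(n+\ell) - \sigma^\perp(n)$.

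To do that, I use the identity $z\mathcal{G}^{\perp\prime}(z) = -\ell z^\ell\Phi^\perp(z) + L^\perp(z)\mathcal{G}^\perp(z)$, which upon taking $[z^{n+\ell}]$ gives
\[
(n+\ell)(\sigma^\perp(n+\ell)-\sigma^\perp(n)) = -\ell\sigma^\perp(n) + [z^{n+\ell}]\bigl(L^\perp(z)\mathcal{G}^\perp(z)\bigr).
\]
The first term is bounded by $\ell$. For the second, I apply the fundamental-theorem identity
\[
\mathcal{G}^\perp(z) = 1 - \ell z^\ell\int_0^1 t^{\ell-1}\Phi^\perp(tz)\,dt + \int_0^1 L^\perp(tz)\mathcal{G}^\perp(tz)\,\frac{dt}{t},
\]
obtained by integrating $(d/dt)\mathcal{G}^\perp(tz)$ from $0$ to $1$. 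Multiplying by $L^\perp(z)$ and taking $[z^{n+\ell}]$, and using that $L^\perp$ has degree at most $n-1$ so its constant contribution vanishes, gives a decomposition $[z^{n+\ell}](L^\perp\mathcal{G}^\perp) = B + T$ with
\[
T = \int_0^1 [z^{n+\ell}]\bigl(L^\perp(z)L^\perp(tz)\mathcal{G}^\perp(tz)\bigr)\,\frac{dt}{t}, \quad B = -\ell\int_0^1 t^{\ell-1}[z^n]\bigl(L^\perp(z)\Phi^\perp(tz)\bigr)\,dt.
\]

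The triple-product integral $T$ I would bound exactly as in the proof of Theorem \ref{Halasz}: write each inner coefficient as a Cauchy integral over $|z|=1/\sqrt{t}$, apply Cauchy--Schwarz and Parseval to the two copies of $L^\perp$ (with $|\chi(j)|\leq 1$), and verify that the telescoping $t^{(n+\ell)/2}\sqrt{(\sum t^{-j})(\sum t^{j})} = t(1-t^{n-1})/(1-t)$ used for Theorem \ref{Halasz} goes through verbatim, yielding
\[
|T| \leq \int_0^1 \max_{|w|=\sqrt{t}}|\mathcal{G}^\perp(w)|\cdot\frac{t(1-t^{n-1})}{1-t}\cdot\frac{dt}{t}.
\]
Using $\max_{|w|=\sqrt{t}}|\mathcal{G}^\perp(w)| \leq \min(L,\,2/(1-\sqrt{t}))$ (from the maximum modulus principle and the elementary $|\mathcal{G}^\perp(w)|\leq 2|\Phi^\perp(w)|\leq 2/(1-|w|)$), then substituting $t=(1-u)^2$ and splitting the $u$-integral into the ranges $(0,1/n)$, $(1/n,2/L)$, $(2/L,1)$ as in the proof of Corollary \ref{HalCor}, gives $|T|\ll L(1+\log(2n/L))$. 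The main technical obstacle is the boundary term $B$: a naive bound via $|\sigma^\perp(m)|\leq 1$ yields only $|B|\ll \ell\log(2n/\ell)$, one logarithm worse than needed. My strategy to close this gap is to apply the Hal\'asz identity $\Phi^\perp(tz) = 1 + \int_0^1 L^\perp(stz)\Phi^\perp(stz)\,ds/s$ once more inside the integrand of $B$ (noting that the constant $1$ drops out since $[z^n]L^\perp = 0$), then change variables $u=st$ so that the $s,t$ integrals collapse into a single integral in $u$ with weight $(1-u^\ell)/u$; the resulting triple-product integral $-\int_0^1 \frac{1-u^\ell}{u}[z^n](L^\perp(z)L^\perp(uz)\Phi^\perp(uz))\,du$ can be estimated by the same Cauchy--Schwarz/Parseval scheme as for $T$, and the $(1-u^\ell)$ weight furnishes the cancellation needed to relate the bound back to $L=\max|\mathcal{G}^\perp|$ rather than $\max|\Phi^\perp|$, giving $|B|\ll \ell + L(1+\log(2n/L))$. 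Combining the bounds on $\ell\sigma^\perp(n)$, $B$, and $T$, dividing by $n+\ell$, and adding the $\ell/n$ bridge error from the second paragraph completes the proof.
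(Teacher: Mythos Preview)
Your approach is correct and genuinely different from the paper's. Both proofs rest on the triple-convolution identity of Lemma~\ref{lem1}, but they organize the argument differently.

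The paper applies Lemma~\ref{lem1} simultaneously at degrees $n$ and $n+\ell$ and subtracts, producing a \emph{single} integral with integrand containing the factor $\big(e(-\ell\theta)(qz)^{-\ell}-1\big)/(qz)^n$. The point of choosing the contour $|z|=1/(q\sqrt t)$ is that, after the substitution $w=qtz$, one has $(w/t)^{-1}=\overline w$ on $|w|=\sqrt t$, so this factor becomes $e(\ell\theta)w^\ell-1$ and pairs with $\mathcal F^\perp(w/q)$ to give exactly $|\mathcal G^\perp(w)|$; the maximum-modulus principle then yields $L(n,\ell;\theta)$ in one stroke. The bridge from $\sigma$ to $\sigma^\perp$ is handled via the recurrence~\eqref{2.1} rather than your power-series factorization, though both give the same $O(\ell/n)$ error.

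Your route instead differentiates $\mathcal G^\perp=(1-z^\ell)\Phi^\perp$ directly, which generates a boundary term $B$ absent from the paper's argument; you then dispatch $B$ by a second application of the fundamental-theorem identity and the collapse $\ell\int_u^1 t^{\ell-1}\,dt=1-u^\ell$. The step you describe only as ``the $(1-u^\ell)$ weight furnishes the cancellation'' deserves to be made explicit: after Cauchy--Schwarz you are left with $(1-u^\ell)\max_{|w|=\sqrt u}|\Phi^\perp(w)|$, and the needed inequality is
\[
(1-u^\ell)\max_{|w|=\sqrt u}|\Phi^\perp(w)|\le 2\bigl(1-u^{\ell/2}\bigr)\max_{|w|=\sqrt u}|\Phi^\perp(w)|\le 2\max_{|w|=\sqrt u}|\mathcal G^\perp(w)|,
\]
since at the maximizer $w_0$ of $|\Phi^\perp|$ one has $|1-w_0^\ell|\ge 1-|w_0|^\ell=1-u^{\ell/2}$. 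With this spelled out, your bound $|B|\ll L(1+\log(2n/L))$ is correct.

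In summary: the paper's conjugation trick is slicker and avoids any boundary term, while your approach is more mechanical (iterate the identity, handle terms separately) and trades the conjugation observation for an elementary pointwise inequality on circles.
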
 

We will apply this result for a suitable choice of $\theta$ in the next section, so as to deduce Theorem \ref{Lipschitz}.

\begin{proof} Let $\chi^\perp(k)$, $\sigma^\perp(k)$ and $\mathcal{F}^\perp(\cdot)$ be defined as in the introduction, so that $\chi^\perp(k) = \chi(k)$ when $k \leq n-1$, and $\chi^\perp(k)=0$ for larger $k$, and
\[
 {\mathcal F}^\perp(z/q) = \sum_{n=0}^{\infty} \sigma^\perp(n) z^n = \exp\Big(\sum_{k\le n-1} \frac{\chi(k)}{k} z^k \Big).
 \]

 If $k\le n-1$ we have $\sigma^\perp(k) = \sigma(k)$, and hence, using \eqref{2.1},  if $1 \le \ell \le n$ then
\[
(n+\ell) |\sigma(n+\ell) -\sigma^\perp(n+\ell)| \le  \sum_{k=1}^{n+\ell} |\chi(k)\sigma(n+\ell-k) - \chi^\perp(k) \sigma^\perp(n+\ell-k)| \le  2(2\ell+1),
\]
since the terms cancel out unless $k\le  \ell$ or $k\ge n$, and as $|\chi(\cdot)|$,  $|\chi^\perp(\cdot)|$, $|\sigma(\cdot)|$, $|\sigma^\perp(\cdot)|$ are all at most $1$.  Therefore we have
\begin{align}
n\left|  \sigma_\theta(n+\ell) - \sigma_\theta(n) \right| &= n\ |\sigma(n+\ell) e(-\ell \theta)-\sigma(n)| \notag \\ \notag
  &\leq |(n+\ell)\sigma(n+\ell)e(-\ell \theta) - n\sigma(n)| +\ell
\\ & \leq |(n+\ell)\sigma^\perp(n+\ell)e(-\ell \theta) - n\sigma^\perp(n)| +5\ell+ 4. \label{4.1} 
\end{align}
 
Following the above tidying up, we switch to our main analytic argument and apply Lemma \ref{lem1} to the first term on the right hand side, obtaining 
\begin{align} 
\label{4.2} 
(n+\ell)\sigma^\perp(n+\ell)e(-\ell \theta) &- n\sigma^\perp(n)  =\nonumber\\
&  \int_0^1 \frac{1}{2\pi i} \int_{|z|=r} \Big(z \frac{{\mathcal F}^{\perp \prime}}{{\mathcal F}^\perp}(z) 
\Big) \Big( tz \frac{{\mathcal F}^{\perp \prime}}{{\mathcal F}^\perp}(tz) \Big) {\mathcal F}^\perp(tz) \frac{e(-\ell\theta)(qz)^{-\ell}-1}{(qz)^{n}} \frac{dz}{z} \frac {dt}{t}. 
\end{align}  
As before we take the inner integral to be over the circle with radius $r=1/(q\sqrt{t})$.  Using that 
$u{\mathcal F}^{\perp \prime}/{\mathcal F}^\perp(u) = \sum_{j=1}^{n-1} \chi(j) (qu)^j$, and by Cauchy--Schwarz, 
we can bound the inner integral by 
\begin{align}
\label{4.3}
\max_{|z|=\frac{1}{q\sqrt{t}}} & |{\mathcal F}^\perp(tz) (e(-\ell \theta)(qz)^{-\ell}-1)| \times \nonumber \\
& t^{\frac n2}  \Big(\frac 1{2\pi} 
\int_{|z|=\frac{1}{q\sqrt{t}}} \Big|\sum_{j=1}^{n-1} \chi(j) (qz)^j\Big|^2 \frac{|dz|}{|z|}\Big)^{\frac 12} 
  \Big( \frac {1}{2\pi } \int_{|z|=\frac{1}{q\sqrt{t}}} \Big|\sum_{j=1}^{n-1} \chi(j) (qtz)^{j} \Big|^2 \frac{|dz|}{|z|}\Big)^{\frac 12}. 
\end{align}
Using the Parseval bound of \eqref{3.5} (with $\kappa=1$ there) and the display immediately following \eqref{3.5}, 
the second line of \eqref{4.3} is 
$$ 
\le t\Big(\frac{1-t^{n-1}}{1-t}\Big) \le t \ \min \left\{ n, \frac 1 {1-t} \right\} \ .
$$
To bound the maximum on the first line of \eqref{4.3}, we first let $w=qtz$ so that
\begin{eqnarray}
\max_{|z|=\frac{1}{q\sqrt{t}}}  |{\mathcal F}^\perp(tz) (e(-\ell \theta)(qz)^{-\ell}-1)| & = & \max_{|w|=\sqrt{t}}  |{\mathcal F}^\perp(w/q) (e(-\ell \theta)(w/t)^{-\ell}-1)| \nonumber \\
& = & \max_{|w|=\sqrt{t}}  |{\mathcal F}^\perp(w/q) (e(\ell \theta)w^{\ell}-1)| , \nonumber
\end{eqnarray}
where the final equality holds because $(w/t)^{-1}$ is the complex conjugate of $w$ when $|w|=\sqrt{t}$. By the maximum modulus principle, this is 
\[
\leq \max_{|w|=1}  |{\mathcal F}^\perp(w/q) (e(\ell \theta)w^{\ell}-1)| =
\max_{|w|=1} \ |e(-\ell \theta)w^{-\ell}-1| \cdot  \exp\Big(  \text{Re} \Big( \sum_{k\le n-1} \frac{\chi(k)}kw^k  \Big)\Big) .
\]
Writing $w=e(-\theta)z$, we see that this is $L(n,\ell;\theta)$.  
Since $|{\mathcal F}^\perp(z)| \le (1-|qz|)^{-1}$ for $|z|\leq 1/q$, we also have the alternative bound
\[
\max_{|z|=\frac{1}{q\sqrt{t}}} |{\mathcal F}^\perp(tz) (e(-\ell \theta)(qz)^{-\ell}-1)| \le 2 \max_{|z|=\frac{1}{q\sqrt{t}}} |{\mathcal F}^\perp(tz)| \le \frac{2}{1-\sqrt{t}} .
\]
Inserting these bounds into \eqref{4.3} and then \eqref{4.2} yields
\[
|(n+\ell)\sigma^\perp(n+\ell)e(-\ell \theta) - n\sigma^\perp(n)|\leq 
\int_0^1   \min \Big\{ n, \frac 1 {1-t} \Big\}  \min \Big \{ L(n,\ell;\theta), \frac{2}{1-\sqrt{t}}  \Big\} 
dt .
\]
Arguing as in the proof of Corollary \ref{HalCor}, we set $t=(1-u)^2$ so that the integral is
\begin{align*}
&\leq \int_0^1   \min \Big \{ 2n, \frac 1 {u} \Big\}  \min \Big\{ L, \frac{2}{u}  \Big\}   du 
\leq \int_0^{1/2n}  2nL  du
+\int_{1/2n}^{1/L} \frac L {u}  du
+\int_{1/L}^1 \frac 2 {u^2}  du\\
&= L    +   L \log(2n/L) +2L-2 
\end{align*}
where $L=L(n,\ell;\theta)$.  Inserting this into \eqref{4.1}, we obtain
\[
 n\ |\sigma_\theta(n+\ell)  -\sigma_\theta(n)|   \leq
 5\ell + 3L+  L\log(2n/L) +2,
\]
and the result follows.
 \end{proof}
 

\section{Lipschitz estimates: Proof of Theorem \ref{Lipschitz} }  
\noindent In the previous section we estimated $|\sigma_\theta(n+\ell)-\sigma_\theta(n)|$ in terms of the 
parameter $L(n,\ell;\theta)$ defined in Proposition \ref{Lipschitz2}.  
%
In the next lemma we show that if we choose $\theta$ such that {\rm Re}$( \sum_{j=1}^{n-1}  {\chi_\theta(j)}/{j})$ is maximized (which is the same as choosing it such that $ |{\mathcal F}^\perp(e(-\theta)/q)|$ is maximized), then
\[
L(n,\ell;\theta) \leq \max_{\alpha\in [0,1)} L^*(n,\ell;\alpha) \ \ \text{where} \ \ L^*(n,\ell;\alpha):=|1 - e(\ell \alpha)|  \ \exp\Big(   \sum_{k=1}^{n-1} \frac{|\cos(\pi k\alpha)|}{k}   \Big) .
\]
We then proceed to give accurate estimates, up to a constant, for each $L^*(n,\ell;\alpha)$ and, optimizing, deduce Theorem  \ref{Lipschitz}.

\subsection{Determining what is to be optimized}

 \begin{lemma} \label{lem2}  Select $\theta$ so as to maximize
{\rm Re}$( \sum_{j=1}^{n-1}  {\chi_\theta(j)}/{j})$. Then
\[
\Big| (1 - z^{\ell} ) \ \exp\Big( \sum_{j=1}^{n-1} \chi_\theta(j) \frac{z^j}{j}\Big) \Big|_{z=e(\alpha)}
  \leq  \ |1 - e(\ell \alpha)|  \ \exp\Big(   \sum_{k=1}^{n-1} \frac{|\cos(\pi k\alpha)|}{k}   \Big)
 \]
\end{lemma}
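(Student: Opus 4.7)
The plan is to evaluate both sides at $z = e(\alpha)$ and reduce the inequality to a statement about the real-valued function $f(\beta) := \textrm{Re}\sum_{j=1}^{n-1} \chi(j) e(j\beta)/j$. Since $\chi_\theta(j) = \chi(j) e(-j\theta)$, the modulus on the left equals $|1-e(\ell\alpha)| \exp(f(\alpha-\theta))$, while the right side already has the factor $|1-e(\ell\alpha)|$ separated. So it suffices to prove the purely real inequality $f(\alpha-\theta) \le \sum_{k=1}^{n-1} |\cos(\pi k\alpha)|/k$. The hypothesis on $\theta$ translates exactly to: $-\theta$ is a global maximum of $f(\beta)$ on $\mathbb{R}/\mathbb{Z}$.

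The key trigonometric identity I would use is
\[
1 + e(j\alpha) \;=\; 2\cos(\pi j\alpha)\, e(j\alpha/2),
\]
which follows from factoring $e(j\alpha/2)$ out of $e(j\alpha/2) + e(-j\alpha/2)$. Multiplying by $\chi(j) e(-j\theta)/j$ and summing, then taking real parts, gives the pleasant symmetrization
\[
f(-\theta) + f(\alpha-\theta) \;=\; 2\,\textrm{Re}\sum_{j=1}^{n-1} \chi(j)\, \cos(\pi j\alpha)\, \frac{e(j(\alpha/2 - \theta))}{j}.
\]
Bounding the right-hand side trivially using $|\chi(j)| \le 1$ and $|e(\cdot)| = 1$ yields
\[
f(-\theta) + f(\alpha-\theta) \;\le\; 2\sum_{j=1}^{n-1} \frac{|\cos(\pi j\alpha)|}{j}.
\]

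Now I invoke the maximality of $\theta$: since $f(-\theta) \ge f(\alpha-\theta)$, the left side is at least $2 f(\alpha-\theta)$, and dividing by $2$ gives exactly the desired bound. Exponentiating and multiplying by $|1 - e(\ell\alpha)|$ finishes the proof, and maximizing over $\alpha$ recovers the statement in terms of $L^*(n,\ell;\alpha)$.

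I do not foresee serious obstacles: the only real step is recognizing the identity $1+e(j\alpha) = 2\cos(\pi j\alpha) e(j\alpha/2)$ as the right way to couple $f(-\theta)$ and $f(\alpha-\theta)$, after which the maximality of $f$ at $-\theta$ gives the factor of two for free. The trick is essentially that the maximality condition lets one throw away the "wrong-sign" complex phase of $\cos(\pi j\alpha)$, converting it into $|\cos(\pi j\alpha)|$ via an averaging argument.
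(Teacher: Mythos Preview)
Your proof is correct and is essentially the same argument as the paper's: both use the identity $1+e(j\alpha)=2\cos(\pi j\alpha)e(j\alpha/2)$ to rewrite $\tfrac12(f(-\theta)+f(\alpha-\theta))$ as $\text{Re}\sum_j \chi_\theta(j)e(j\alpha/2)\cos(\pi j\alpha)/j$, then invoke maximality at $\theta$ to replace $f(\alpha-\theta)$ by this average, and finally bound trivially using $|\chi(j)|\le 1$. The only difference is cosmetic---the paper keeps the $\chi_\theta$ notation throughout rather than introducing your auxiliary function $f(\beta)$.
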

 \begin{proof}   If $z=e(\alpha)$ then, by the definition of $\theta$ as the maximizer,
 \begin{align*} 
 \text{\rm Re} \ \Big( \sum_{j=1}^{n-1}  \frac{\chi_\theta(j)}{j} z^j\Big) &\leq 
\frac 12\   \text{\rm Re} \ \Big( \sum_{j=1}^{n-1}  \frac{\chi_\theta(j)}{j} +  \sum_{j=1}^{n-1}  \frac{\chi_\theta(j)}{j} z^j\Big) \\
   &=    \text{\rm Re} \ \Big( \sum_{j=1}^{n-1}  \frac{\chi_\theta(j)e(j\alpha/2)}{j} \cos(\pi j\alpha)\Big)  
 \leq  \sum_{k=1}^{n-1} \frac{|\cos(\pi k\alpha)|}{k}. 
\end{align*}
This proves the lemma.  Note also that equality holds in the last step above when $\chi(k)=e(k(\theta-\alpha/2))\ \text{sign}(\cos(\pi k \alpha))$, and so the lemma is sharp in general.
 \end{proof}

\subsection{Upper bounds for a given $\alpha$}
 
\begin{lemma} \label{lem3}  Suppose $n \geq 2$ and $\alpha \in [0,1)$ are given.  Let $R:=\lceil \log n \rceil$, 
and select $m\leq 2R$ such that $|\alpha-b/m|\leq 1/(2mR)$ for some $(b,m)=1$. Then
 \[
L^*(n,\ell;\alpha)\asymp \|\ell \alpha\|  \  n^{\frac 2\pi }   \Big(\min\{ n,1/ \| m\alpha\|\}\Big)^{c_m-\frac 2\pi }  m^{O(1)}
\]
where  $\| t\|$ is the distance from $t$ to the nearest integer, and
\[
 c_m := \frac{1}{m} \sum_{a=0}^{m-1} | \cos(\pi a/m)| = 
 \begin{cases} 
 \frac {\text{\rm cosec}(\pi/2m)}{m} &\text{if} \ m \ \text{is odd}, \\
 \frac   {\cot(\pi/2m)}m &\text{if} \ m \ \text{is even}. 
 \end{cases} 
\]
An alternative expression for $c_m$ is 
\[
c_m    = \frac 2\pi \Big(  1   - 2
\sum_{\substack{r\geq 1 \\ m|r}} \frac{(-1)^r} {4r^2-1} \Big)  .
 \]
\end{lemma}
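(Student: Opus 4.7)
My plan is direct. Writing $\alpha = b/m + \delta$ with $|\delta| \leq 1/(2mR)$, so that $\|m\alpha\| = m|\delta|$, I will compute
\[
\Sigma(\alpha) := \sum_{k=1}^{n-1}\frac{|\cos(\pi k\alpha)|}{k}
\]
to within additive error $O(\log m)$; exponentiating and multiplying by $|1-e(\ell\alpha)|\asymp\|\ell\alpha\|$ then gives the lemma, as an additive $O(\log m)$ error in the exponent translates to the multiplicative factor $m^{O(1)}$. The key split is at the transition scale
\[
K_1 := \min\{n-1,\; 1/|\delta|\} = \min\{n-1,\; m/\|m\alpha\|\}.
\]

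For the range $k\leq K_1$, the bound $k|\delta|\leq 1$ holds and the Lipschitz estimate $\bigl||\cos(x+h)|-|\cos(x)|\bigr|\leq|h|$ applied with $x=\pi kb/m$ (which depends only on $a:=k\bmod m$ since $\cos(\pi jb+\pi ab/m)=\pm\cos(\pi ab/m)$) yields $|\cos(\pi k\alpha)|=|\cos(\pi ab/m)|+O(k|\delta|)$. The error contributes $O(|\delta|K_1)=O(1)$, leaving $\sum_{k\leq K_1}|\cos(\pi kb/m)|/k$. Since $k\mapsto|\cos(\pi kb/m)|$ is periodic mod $m$ with mean $c_m=\tfrac{1}{m}\sum_{a=0}^{m-1}|\cos(\pi a/m)|$ (using that $(b,m)=1$, so $b$ permutes residues), and its partial sums $\sum_{k\leq N}(|\cos(\pi kb/m)|-c_m)$ are bounded by $2(N\bmod m)$, Abel summation gives $c_m\log K_1+O(\log m)$.

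For the range $K_1<k\leq n-1$ (non-empty only when $|\delta|>1/n$) I use the absolutely convergent Fourier expansion
\[
|\cos(\pi t)| = \frac{2}{\pi} + \frac{4}{\pi}\sum_{r\geq 1}\frac{(-1)^{r+1}}{4r^2-1}\cos(2\pi rt),
\]
which I verify by direct integration. The constant term contributes $(2/\pi)\log(n/K_1)+O(1)$. For each $r\geq 1$, the partial-summation estimate $\bigl|\sum_{K_1<k\leq n-1}\cos(2\pi kr\alpha)/k\bigr|\ll 1/(K_1\|r\alpha\|)$, combined with $\|r\alpha\|\gg 1/m$ for $m\nmid r$ (so the contribution per such $r$ is $\ll \|m\alpha\|$, absorbed by $\sum 1/r^2$) and $\|r\alpha\|=jm|\delta|$ for $r=jm$ (where $1/(K_1\cdot jm|\delta|)\leq 1/(jm)$ summed against $1/(jm)^2$ gives $O(1/m^3)$), makes the total oscillatory contribution $O(1)$. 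So this range contributes $(2/\pi)\log(n/K_1)+O(1)$.

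Adding, $\Sigma(\alpha)=(2/\pi)\log n+(c_m-2/\pi)\log K_1+O(\log m)$. Since $K_1$ and $\min\{n,1/\|m\alpha\|\}$ differ by a factor between $1$ and $m$, the swap is absorbed into the $O(\log m)$ and, after exponentiation, into $m^{O(1)}$. The alternative formula for $c_m$ stated in the lemma drops out by averaging the Fourier expansion of $|\cos(\pi t)|$ at $t=a/m$ over $a\in\{0,\ldots,m-1\}$ and using the orthogonality $\sum_{a=0}^{m-1}\cos(2\pi ra/m)=m\cdot\mathbf{1}_{m\mid r}$. The main obstacle I anticipate is making the Fourier analysis in the second range genuinely uniform in $m\leq 2R=O(\log n)$: every $O(1)$ I claim must have an implicit constant independent of $m$ and $n$, which in turn requires extracting the diagonal/off-diagonal separation above carefully. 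The two-sided $\asymp$ in the lemma then follows because $c_m\log K_1$ is monotone in $K_1$ and, once $K_1$ is large enough relative to $m$, exceeds the accumulated $O(\log m)$ error, so the leading behaviour cannot be cancelled.
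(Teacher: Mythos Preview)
Your approach is essentially correct and genuinely different from the paper's. The paper applies the Fourier expansion of $|\cos(\pi t)|$ to the \emph{entire} sum $\sum_{k=1}^{n-1}|\cos(\pi k\alpha)|/k$, truncates the Fourier series at $r\le R$, and then evaluates each $\sum_{k}\cos(2\pi kr\alpha)/k$ via partial summation as $\log\min\{n,1/\|r\alpha\|\}+O(1)$, finally splitting the $r$-sum according to whether $m\mid r$. You instead split the $k$-sum at $K_1$, handling $k\le K_1$ by a direct Lipschitz-and-periodicity argument (no Fourier needed there) and reserving Fourier only for $k>K_1$. Your route is slightly more elementary in the first range; the paper's is more uniform in that a single mechanism handles everything. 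Both land on the same formula $\Sigma(\alpha)=\tfrac{2}{\pi}\log n+(c_m-\tfrac{2}{\pi})\log\min\{n,1/\|m\alpha\|\}+O(\log m)$.

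There is one genuine (though easily repaired) gap in your second range. You assert $\|r\alpha\|\gg 1/m$ for \emph{every} $r$ with $m\nmid r$, but this only holds when $r\le R$: for such $r$ one has $|r\delta|\le R/(2mR)=1/(2m)$, so $\|r\alpha\|\ge\|rb/m\|-|r\delta|\ge 1/(2m)$; for larger $r$ there is no such control. Similarly $\|jm\,\alpha\|=jm|\delta|$ needs $jm|\delta|\le 1/2$, which again forces $j\le R$. The fix is the one the paper uses: truncate the Fourier series at $r\le R$ and bound the tail trivially by $\log n\sum_{r>R}1/r^2\ll(\log n)/R=O(1)$. With that insertion your argument goes through cleanly.

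Your final paragraph about the two-sided $\asymp$ is unnecessary and slightly misreads the statement. The assertion $L^*\asymp X\cdot m^{O(1)}$ means precisely that $\log(L^*/X)=O(\log m)$; since every step of your computation is a genuine two-sided equality up to additive $O(\log m)$, exponentiating already gives both inequalities. There is nothing further to argue about ``leading behaviour not being cancelled''.
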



 \begin{proof}  The function $|\cos (\pi t)|$ is periodic with period $1$, and a little computation gives the Fourier expansion
\begin{align*}
|\cos(\pi t)| = \sum_{r\in {\Bbb Z}} \frac{(-1)^{r+1}}{2\pi (r^2-1/4)} e(rt) = \frac{2}{\pi} - \sum_{r=1}^{\infty} \frac{(-1)^r}{2\pi(r^2-1/4)} (2\cos (2\pi rt)).
\end{align*}
%
Therefore
\begin{equation} \label{Fourier}
\sum_{k=1}^{n-1} \frac{|\cos(\pi k\alpha)|}{k} = \frac 2\pi \Big(  \sum_{k=1}^{n-1} \frac{1}{k}   - 2
\sum_{r\geq 1} \frac{(-1)^r} {4r^2-1} \sum_{k=1}^{n-1} \frac{\cos(2\pi kr\alpha)}{k} 
\Big) .
\end{equation}
The first sum is $=\log n+O(1)$, and all subsequent sums over $k$ are  $\ll \log n$, so we may truncate the $r$-sum at $r\leq R$, 
with an error of $O(1)$.

Let $S(x):=\sum_{k\leq x} \cos(2\pi kr\alpha)$, which is easily seen to be $ \ll 1/\| r\alpha\|$. 
By partial summation, we deduce that if $K\gg 1/\| r\alpha\|$ then
\[ 
\sum_{k\geq K} \frac{\cos(2\pi kr\alpha)}{k} =\int_K^\infty \frac{dS(t)}t = \frac{S(K)}K + \int_K^\infty \frac{S(t)}{t^2} dt\ll \frac 1{K \| r\alpha\|} \ll 1 .
\]
Therefore
\begin{align*}
 \sum_{k=1}^{n-1} \frac{\cos(2\pi kr\alpha)}{k} = \sum_{k=1}^{\min\{  1/\|r\alpha\| , n\}} \frac{1}{k}  + O(1) 
= \log (\min\{  1/\|r\alpha\| , n\} ) +O(1) . 
 \end{align*}
 
For each $r\leq R$  we have $|r\alpha-rb/m|\leq r/2mR\leq 1/2m$. 
Therefore if $m\nmid r$ then $\| r\alpha\|\geq 1/2m$, and so $\log (\min\{  1/\|r\alpha\| , n\} ) = \log (1/\|rb/m\| )+O(1)$. Since this is $\ll \log m$ we see in particular that the terms in the $r$-sum, for which $R\geq r>\log m$ and $m\nmid r$, contribute $O(1)$.  Therefore the contribution of the terms $r\le R$ with $m\nmid r$ to \eqref{Fourier} is 
\begin{equation} 
\label{Fourier2} 
-\frac{4}{\pi} \sum_{1\le r\le \log m} \frac{(-1)^r}{4r^2-1} \log \frac{1}{\| rb/m\|} + O(1) = O(1+\log m). 
\end{equation}

Note also that if $r\leq R$ and $m|r$ then $\| r\alpha\|=(r/m)\ \| m\alpha\|$, and so the contribution of these terms to \eqref{Fourier} is 
\[ 
-\frac{4}{\pi} \sum_{\substack{1\le r\le R\\ m|r}} \frac{(-1)^r}{4r^2-1} \log \min\Big(n, \frac{1}{(r/m) \| m\alpha\|}\Big) +O(1) = 
\Big(c_m -\frac 2\pi\Big) \log \min \Big(n,\frac{1}{\| m\alpha\|}\Big) + O(1).
\]
Using these observations in \eqref{Fourier}, we deduce the lemma.  
\end{proof}

\noindent {\bf Remark 10}.  From \eqref{Fourier2}, we see that the $m^{O(1)}$ term in Lemma \ref{lem3}  can be replaced by 
the more precise expression 
\[
\exp\Big ( - \frac 4\pi    \sum_{1\leq r\leq \log m} \frac{(-1)^r} {4r^2-1} \log (m/|(rb)_m| ) \Big) ,
\]
where $(t)_m$ is the least residue of $t\pmod m$, in absolute value. This can be shown to be  
$\ll m^{1-\frac 2\pi}$ (which is attained when $b=1$) and 
$\gg m^{\frac 12-\frac 2\pi}$ (which is attained when $2b\equiv 1 \pmod m$).

 \begin{corollary} \label{cor4}  Let $m_0$ be the smallest odd integer that does not divide $\ell$. 
Then 
\[
\max_{\alpha\in [0,1)} L^*(n,\ell;\alpha) \asymp \max\{ n^{c_{m_0}}  m_0^{O(1)},  n^{\frac 2\pi }  \ell^{1-\frac 2\pi } \} .
\]
\end{corollary}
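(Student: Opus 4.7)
My strategy is to apply Lemma \ref{lem3} to the expression $L^*(n,\ell;\alpha)$ for each $\alpha\in[0,1)$ and then optimize. Concretely, Lemma \ref{lem3} gives
\[
L^*(n,\ell;\alpha) \asymp \|\ell\alpha\| \cdot n^{2/\pi} \cdot \bigl(\min\{n, 1/\|m\alpha\|\}\bigr)^{c_m - 2/\pi} \cdot m^{O(1)},
\]
with $m = m(\alpha)\le 2R$ from the Dirichlet approximation. Since $c_m > 2/\pi$ exactly when $m$ is odd (from the closed form $c_m = \csc(\pi/2m)/m$, compared against the Fourier identity for $c_m$) and $c_m$ is decreasing in $m$ through the odd integers, large values of $L^*$ come from two different regimes: (A) $\alpha$ very close to a rational $b/m$ with $m$ odd (harvesting the $X^{c_m-2/\pi}$ bonus with $X$ close to $n$), and (B) $\alpha$ of size $\asymp 1/\ell$ where $m=1$ and the scale $X = 1/\|\alpha\|$ is not too small.

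\textbf{Lower bound.} I would exhibit two choices of $\alpha$. First, take $\alpha = b/m_0$ with $\gcd(b,m_0) = 1$ chosen so that $\|b\ell/m_0\|$ is bounded below. Writing $d = \gcd(\ell,m_0)$, the quotient $m_0/d$ is an odd divisor of $m_0$ at least $3$, so the pigeonhole principle on $b = 1,\dots,m_0/d$ produces $b$ with $\|b\ell/m_0\| \geq 1/3$. For this $\alpha$ we have $\|m_0\alpha\| = 0$, so $X = n$, and Lemma \ref{lem3} yields $L^*(n,\ell;\alpha) \asymp n^{c_{m_0}} m_0^{O(1)}$. Second, take $\alpha = 1/(2\ell)$, assuming $\ell \ge 2R$ (otherwise $\ell^{1-2/\pi}$ is polylogarithmic in $n$ and the second term is absorbed in the first). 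Then the best approximation at level $R$ is $m = 1$, $b = 0$; we have $\|m\alpha\| = 1/(2\ell)$, $X = \min\{n,2\ell\}$, $c_1 = 1$, and $\|\ell\alpha\| = 1/2$, so Lemma \ref{lem3} gives $L^* \asymp n^{2/\pi}\ell^{1-2/\pi}$ whenever $\ell \le n$.

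\textbf{Upper bound.} For arbitrary $\alpha$, I would apply Lemma \ref{lem3} and split into three cases. If $m$ is even then $c_m \leq 2/\pi$, so $X^{c_m-2/\pi}\le 1$ and $L^* \lesssim n^{2/\pi} m^{O(1)} \lesssim n^{2/\pi}(\log n)^{O(1)}$, which is dominated by one of the two terms on the right. If $m$ is odd with $m \geq m_0$, the monotonicity of $c_m$ over odd $m$ gives $X^{c_m-2/\pi}\le n^{c_{m_0}-2/\pi}$, yielding $L^*\lesssim n^{c_{m_0}} m^{O(1)}$; at $m = m_0$ this is precisely the first term, and for larger odd $m$ the gap $n^{c_m - c_{m_0}}$ dominates the polynomial-in-$m$ loss. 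If $m$ is odd with $m < m_0$, then $m \mid \ell$; writing $\alpha = b/m+\delta$ with $|\delta|\le 1/(2mR)$ and noting that $\ell b/m \in \mathbb{Z}$ forces $\|\ell\alpha\| \leq \ell|\delta|$, a short calculation with $X \leq 1/(m|\delta|)$ shows the product $\|\ell\alpha\|\cdot X^{c_m-2/\pi}$ is maximized (over $|\delta|$) at the endpoint $|\delta| \asymp 1/(mR)$ and gives $L^*\lesssim \ell n^{2/\pi} m^{O(1)}/R$, safely absorbed into the second term of the max.

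\textbf{Main obstacle.} The principal technical difficulty is the last case, where $m \mid \ell$ but $\alpha$ is merely close to $b/m$: one must carefully trade off the suppression of $\|\ell\alpha\|$ against the potential gain from $(1/\|m\alpha\|)^{c_m-2/\pi}$, and verify that the worst-case balance point still produces a bound fitting into $n^{2/\pi}\ell^{1-2/\pi}$. Secondarily, tracking the polynomial-in-$m$ factors $m^{O(1)}$ versus $(\log n)^{O(1)}$ requires some care when $m_0$ is large and the spectral gap $c_{m_0}-c_{m_0+2}\asymp 1/m_0^3$ is small, since one must ensure the $m^{O(1)}$ is absorbed into $m_0^{O(1)}$ rather than blowing up the claim.
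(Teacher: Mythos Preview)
Your overall strategy matches the paper's: apply Lemma \ref{lem3}, exhibit $\alpha=b/m_0$ and $\alpha\asymp 1/\ell$ for the lower bound, and split the upper bound according to the parity of $m$ and whether $m\ge m_0$. The lower bound and the first two upper-bound cases are essentially right.

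The gap is in your third case, $m$ odd with $m<m_0$ (so $m\mid\ell$). You bound $\|\ell\alpha\|\le \ell|\delta|$ and then claim the product $\|\ell\alpha\|\cdot X^{c_m-2/\pi}$ is maximized at the endpoint $|\delta|\asymp 1/(mR)$, giving $L^*\lesssim \ell\, n^{2/\pi}m^{O(1)}/R$. Two things go wrong. First, at that endpoint $X=2R$, so the value is actually $\ell\, n^{2/\pi}R^{c_m-2/\pi-1}m^{O(1)}$, not $\ell n^{2/\pi}/R$; since $c_m>2/\pi$ you have dropped a factor $R^{c_m-2/\pi}\ge 1$. Second and more seriously, even the correct endpoint value is not ``safely absorbed into the second term of the max'': you would need $\ell^{2/\pi}\ll R\asymp\log n$, which fails for all $\ell\gg(\log n)^{\pi/2}$. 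The crude bound $\|\ell\alpha\|\le\ell|\delta|$ is simply too weak once $\ell|\delta|$ exceeds $1/2$.

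The paper repairs this by splitting at $\|m\alpha\|=1/\ell$ rather than pushing $|\delta|$ to the Dirichlet endpoint. If $\|m\alpha\|\le 1/\ell$ one uses $\|\ell\alpha\|\le(\ell/m)\|m\alpha\|$ together with $X\le 1/\|m\alpha\|$, and the combination $\|m\alpha\|^{1+2/\pi-c_m}$ is increasing, so its maximum over $\|m\alpha\|\le 1/\ell$ gives $L^*\ll n^{2/\pi}\ell^{c_m-2/\pi}m^{O(1)}$. If $\|m\alpha\|>1/\ell$ one just uses $\|\ell\alpha\|\le 1$ and $X<\ell$, again obtaining $L^*\ll n^{2/\pi}\ell^{c_m-2/\pi}m^{O(1)}$. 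Since $c_m\le c_3=2/3<1$ for $m\ge 3$ (and $c_1=1$), and $m<m_0\ll\log(2\ell)$, this is $\ll n^{2/\pi}\ell^{1-2/\pi}$ in every case. Replacing your endpoint argument with this split closes the gap.
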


\begin{proof}
Observe first, for use later, that $c_1 = 1$ and $c_2 = 1/2$, and that the $c_m$ tend upwards to $2/\pi$ as $m$ varies over even values, and they tend downwards to $2/\pi$ as $m$ varies over odd values. Note also that if $m$ is odd we have $\frac{2}{\pi} + \frac{C}{m^{2}} > c_{m} > \frac{2}{\pi} + \frac{c}{m^{2}}$, for certain absolute constants $C, c > 0$.

Let $0 \leq \alpha < 1$, and correspondingly choose $1 \leq m \leq 2 \lceil \log n \rceil$ as in Lemma \ref{lem3}. First we shall establish the upper bound implicit in Corollary \ref{cor4}.

In the case where $m$ is even we have $c_m - \frac{2}{\pi} < 0$, so Lemma \ref{lem3} directly implies that
$$ L^*(n,\ell;\alpha) \ll \|\ell \alpha\|  \  n^{\frac{2}{\pi}} m^{O(1)} \leq n^{\frac{2}{\pi}} m^{O(1)} . $$
Since $m_0$ is odd we have $c_{m_0} > \frac{2}{\pi} + \frac{c}{m_0^{2}}$, and since also $m, m_0 \ll \log n$ we conclude that $ L^*(n,\ell;\alpha) \ll n^{c_{m_0}}  m_0^{O(1)}$ if $m$ is even, as required.

In the case where $m$ is odd we have $c_m - \frac{2}{\pi} > 0$, and we need to divide into some sub-cases. Firstly, if $m \geq m_0$ then $c_m \leq c_{m_0}$, and so Lemma \ref{lem3} implies similarly as before that
$$ L^*(n,\ell;\alpha) \ll \|\ell \alpha\|  \  n^{c_m } m^{O(1)} \leq n^{c_m } m^{O(1)} \ll n^{c_{m_0}}  m_0^{O(1)} . $$

If $m$ is odd and $m < m_0$ then we must have $m \mid \ell$, by definition of $m_0$. Now there are two further sub-cases. Firstly, if $\|m \alpha\| \leq 1/\ell$ then we use the bound $\|\ell \alpha\| \leq (\ell/m) \|m \alpha\| \leq \ell \|m \alpha\|$ in conjunction with Lemma \ref{lem3}, obtaining
$$ L^*(n,\ell;\alpha) \ll \|\ell \alpha\| \  n^{\frac 2\pi } \| m\alpha\|^{\frac 2\pi - c_m }  m^{O(1)} \leq \ell n^{\frac 2\pi } \| m\alpha\|^{1 + \frac 2\pi - c_m }  m^{O(1)} \leq n^{\frac{2}{\pi}} \ell^{c_m - \frac{2}{\pi}} m^{O(1)} ,  $$
since $\|m \alpha\| \leq 1/\ell$. Then since $m < m_0 \ll \log(2\ell)$, and $c_m \leq c_3 = 2/3$ unless $m=1$, this bound is $\ll n^{\frac 2\pi }  \ell^{1-\frac 2\pi }$, which is acceptable.

The remaining sub-case is where $m < m_0$ is odd but $\|m \alpha\| > 1/\ell$, in which case Lemma \ref{lem3} gives
$$ L^*(n,\ell;\alpha) \ll \|\ell \alpha\| \  n^{\frac 2\pi } \| m\alpha\|^{\frac 2\pi - c_m }  m^{O(1)} \leq n^{\frac 2\pi } \| m\alpha\|^{\frac 2\pi - c_m }  m^{O(1)} \leq n^{\frac{2}{\pi}} \ell^{c_m - \frac{2}{\pi}} m^{O(1)} ,  $$
which is acceptable as in the previous sub-case.

We complete the proof by establishing the lower bound implicit in the corollary: namely,  $ \max_{\alpha\in [0,1)} L^*(n,\ell;\alpha) \gg \max\{ n^{c_{m_0}}  m_0^{O(1)},  n^{\frac 2\pi }  \ell^{1-\frac 2\pi } \}$.   We show that the first term in the max is attained when $\alpha=b/m_0$ with $(b,m_0)=1$, and the second term in the max is attained when $\alpha$ is roughly of size $1/\ell$.  Indeed, if $\alpha=b/m_0$ for some $(b,m_0)=1$ then Lemma \ref{lem3} (with $m=m_0$) gives
$$
L^*(n,\ell;\alpha)\asymp \Big\|\frac{\ell b}{m_0} \Big\|  \  n^{c_{m_0}} m_0^{O(1)} \asymp n^{c_{m_0}}m_0^{O(1)} , 
$$
since $m_0$ does not divide $\ell$.

This suffices, unless $\ell$ is at least a large multiple of $\log n$ (otherwise $n^{\frac 2\pi }  \ell^{1-\frac 2\pi } < n^{c_{m_0}}  m_0^{O(1)}$, since $c_{m_0} > \frac{2}{\pi} + \frac{c}{m_0^{2}} > \frac{2}{\pi} + \frac{c}{\log^{2}(2\ell)}$). If $\ell$ is at least a large multiple of $\log n$, then for any $1/(4\ell) \leq \| \alpha \| \leq 1/(2\ell)$ we can take $m=1$ in Lemma \ref{lem3}, and obtain
$$ L^*(n,\ell;\alpha)\asymp \|\ell \alpha\|  \  n^{\frac 2\pi } \ \| \alpha\|^{\frac 2\pi - c_1 } \asymp \ell n^{\frac 2\pi } \ \| \alpha\|^{1 + \frac 2\pi - c_1 } \asymp n^{\frac 2\pi }  \ell^{1-\frac 2\pi } , $$
since $c_1 = 1$.
\end{proof}

\begin{proof} [Proof of Theorem \ref{Lipschitz}] As noted earlier, if $\theta$ in Proposition \ref{Lipschitz2} is chosen as in 
Lemma \ref{lem2}, then $L(n,\ell;\theta) \le \max_{\alpha\in [0,1)} L^*(n,\ell;\alpha)$.  Now using the bound of Corollary \ref{cor4}
in Proposition \ref{Lipschitz2},   we obtain Theorem \ref{Lipschitz}. We record that, using Remark 10, one can replace $(\log n)^{O(1)}$ in the statement of Theorem \ref{Lipschitz} by $(\log n)^{2-\frac 2\pi}$.
\end{proof}

\section{Final examples}  \label{Construct}

\noindent We now give examples which establish that the upper bound in Theorem  \ref{Lipschitz} is attained for each fixed odd $m$. These are based on the discussion of Example 9, with a suitable choice of the points $\alpha_j$ there.
 
Suppose that for integers $k\ge 1$, the function $\chi(k)$ is periodic $\pmod m$.   Write
$$ 
{\hat \chi}(j) = \frac{1}{m} \sum_{k=1}^{m} \chi(k) e\Big( -\frac{jk}{m}\Big), 
$$ 
so that for all $n\ge 1$ we have 
$$
 \chi(n) = \sum_{j=1}^{m} {\hat \chi}(j) e\Big(\frac{jn}{m}\Big).
 $$
This class of examples satisfies the hypothesis of Example 9, and so the corresponding solution $\sigma(n)$ is given by the generating function 
\[
\sum_{n\geq 0} \sigma(n) z^n = \prod_{j=1}^{m}  (1-ze(j/m))^{- {\hat \chi}(j) }.
\]
Arguing as in Example 9, we may find asymptotics for $\sigma(n)$.

We now consider the special case where $m>1$ is odd, and $\chi(k) = \text{sign}(\cos(2\pi k/m))$ for all $k\ge 1$.  
Thus $\chi(k)=1$ when $\| k/m\| <\frac 14$ and $\chi(k) =-1$ otherwise. Recall from Remark 3 above that we are free to construct examples simply by specifying the behaviour of $\chi(k)$. Since $\chi(k)=\chi(m-k)$ for all $1\le k\le m-1$ we see that 
$$ 
{\hat \chi}(j) = \frac{\chi(m)}{m} + \frac{1}{m} \sum_{k=1}^{m-1}  \cos(2\pi jk/m) \chi(k) 
= \frac{1}{m} \sum_{k=1}^{m} \cos(2\pi jk/m) \chi(k). 
$$
In particular
$$ 
{\hat \chi}(1)  = {\hat \chi}(m-1) = \frac{1}{m} \sum_{k=1}^{m} \left|\cos(2\pi k/m) \right| = c_m, 
$$
whereas ${\hat \chi}(j)$ is a real number smaller than $c_m$ for all other values of $j$ 
(because the values $\chi(k)$ no longer perfectly ``resonate'' with the coefficients $\cos(2\pi jk/m)$). Therefore, by Example 9, we have
$$  \sigma(n) =\Big( C_0(1) e(n/m) +C_0(m-1) e(-n/m)+o(1)\Big)  \frac{n^{c_m-1}}{\Gamma(c_m)}  = C \Big( \cos \Big( \frac{2\pi n}{m} + \beta \Big)  +o(1) \Big) \frac{n^{c_m-1}}{\Gamma(c_m)} $$
for a suitable non-zero real number $C$, and some $\beta$.

Now Theorem \ref{Lipschitz} asserts that for any $1 \leq \ell \leq n$ such that $m$ is the smallest odd integer not dividing $\ell$, we will have
$$ 
\left|  \sigma_\theta(n+\ell) - \sigma_\theta(n) \right| = \left|  \sigma(n+\ell) e(-\ell\theta) - \sigma(n) \right| \ll \Big( \frac{\ell}{n}\Big)^{1-\frac 2\pi} \log \frac{2n} {\ell} + \frac{(\log n)^{O(1)}}{n^{1-c_{m}} }, 
$$
for a suitable $\theta\in [0,1)$. 
We now show that we will have $\left|  \sigma(n+\ell) e(-\ell\theta) - \sigma(n) \right| \gg 1/n^{1-c_m}$ for some (in fact a positive proportion of) $n$ values, for any such fixed $\ell$, so that the second term in the upper bound is sharp up to logarithmic factors. Indeed, for all $\theta$ we have 
\begin{align*}
\left|  \sigma(n+\ell) e(-\ell\theta) - \sigma(n) \right| &\geq \Big||\sigma(n+\ell)| - |\sigma(n)| \Big| \\
&=\Big|C \frac{n^{c_m-1}}{\Gamma(c_m)} \Big\{ \Big|\cos \Big( \frac{2\pi (n+\ell)}{m} + \beta \Big)\Big| -
\Big |\cos \Big( \frac{2\pi n}{m} + \beta \Big)\Big| + o(1) \Big\}\Big| . 
\end{align*}
This will  be $\gg_{m} 1/n^{1-c_m}$ unless $( \frac{2\pi n}{m} + \beta ) \equiv \pm ( \frac{2\pi (n+\ell)}{m} + \beta ) \pmod {\pi}$, and since $m$ is odd and $m \nmid \ell$ that can only happen if $( \frac{2\pi n}{m} + \beta ) \equiv - ( \frac{2\pi (n+\ell)}{m} + \beta ) \pmod {\pi}$, which (since $m$ is odd) can only happen for those $n$ in one particular residue class $\pmod m$. This justifies the remarks made at the end of the introduction concerning the optimality of the exponents appearing in Theorem \ref{Lipschitz}.


To give a more explicit example, when $m=3$ the analysis of Example 9 gives, for $\omega=e(\frac 13)$,
\[ 
\sum_{n\geq 0} \sigma(n) z^n = \frac{(1-z)^{1/3}}{ ((1-\omega z)(1-\overline{\omega} z))^{2/3}} = 
\frac{1-z}{(1-z^3)^{2/3}},
\]
and indeed that $\sigma(3n)=-\sigma(3n+1)=\Gamma(n+\frac 23)/n!\Gamma(\frac 23)\sim n^{-1/3}/\Gamma(\frac 23)$, and $\sigma(3n+2)=0$.
\medskip


\bibliographystyle{plain}
\bibliography{HalaszRefs}{}
 
\end{document}